\documentclass[11pt, reqno]{amsart}
\usepackage{geometry}
\usepackage{amsmath, amsthm, amscd, amsfonts, amssymb, mathtools, color, hyperref}
\usepackage[dvipsnames]{xcolor}
\geometry{a4paper,  left=3cm, right = 3cm, top=3.5cm, bottom=3.5cm}

\newtheorem{theorem}{Theorem}[section]
\newtheorem{lemma}[theorem]{Lemma}

\newtheorem{fact}[theorem]{Fact}
\theoremstyle{definition}
\newtheorem{definition}[theorem]{Definition}
\newtheorem{example}[theorem]{Example}

\theoremstyle{remark}
\newtheorem{remark}[theorem]{Remark}
\numberwithin{equation}{section}
\newcommand*{\bigchi}{\mbox{\Large$\chi$}}

\title[Matrix-valued regular functions]{Maximum principles for matrix-valued regular functions of a quaternionic 
variable}
\author{Sachindranath Jayaraman$^{\ast}$ and Dhashna T. Pillai}
\address{School of Mathematics\\ 
Indian Institute of Science Education and Research Thiruvananthapuram\\ 
Maruthamala P.O., Vithura, Thiruvananthapuram -- 695 551, Kerala, India.}
\email{(sachindranathj,tpdhash24)@iisertvm.ac.in, sachindranathj@gmail.com}
\thanks{$^\ast$Corresponding author}

\subjclass[2020]{Primary: 30G35, 15B33, 47S05 Secondary: 15A18}
\keywords{Quaternions; symmetric slice domain; matrix-valued regular functions; complex adjoint matrix;  
maximizing vector; maximum principle; finite Blaschke products; Caratheodory and Fisher type approximation 
theorems}

\begin{document}

\begin{abstract}
A quaternionic matrix-valued regular function is a map $F: \Omega \rightarrow M_n(\mathbb{H})$ 
whose entries are (left) regular functions of a quaternion variable, where $\Omega$ is a domain in 
$\mathbb{H}$. Our aim is to bring out some maximum norm principles for such functions. We derive an SVD type 
decomposition theorem for such functions, using the notion of maximizing vectors. Some maximum principles for 
singular values of matrix-valued regular function are brought out next. We then proceed to prove a Fisher type 
approximation theorem for regular functions $f: \mathbb{B} \rightarrow \overline{\mathbb{B}}$ that are continuous on 
$\partial \mathbb{B}$, in terms of convex combinations of finite Blaschke products over $\mathbb{H}$ 
($\mathbb{B}$ being the quaternionic unit ball). This in turn yields a Fisher type approximation theorem for an 
$n \times n$ matrix-valued regular function on the quaternionic unit ball, where each entry of the matrix satisfies the 
same condition as above.
\end{abstract}

\maketitle

\section{A brief introduction}
We work throughout over the fields $\mathbb{R}$ and $\mathbb{C}$ of real and complex 
numbers and the skew field / division ring of the set of real quaternions $\mathbb{H}$, defined by, 
$\mathbb{H}: = \{q = a_0+a_1i+a_2j+a_3k: i^2=j^2=k^2=ijk=-1, ij=-ji=k, jk=-kj=i, ki=-ik=j, 
\  a_i \in \mathbb{R}\}$. For $q = a_0+ a_1i+ a_2 j+ a_3 k \in \mathbb{H}, 
\ |q|:=\sqrt{a_0^2 +a_1^2+a_2^2+a_3^2}$ denotes the modulus of $q$ and 
$\overline{q}:= a_0- a_1i -a_2 j- a_3k$ denotes the conjugate of $q$. Quaternions 
$p$ and $q$ are similar if $s^{-1}qs =p$ for some $s \in \mathbb{H} \setminus \{0\}$. If quaternions 
$p$ and $q$ are similar, then $|p|=|q|$. Let $\mathbb{H}^n$ denote the set of all column vectors 
with entries from the quaternions. 
For $v = \begin{bmatrix}
		v_1 \\
		\vdots \\
		v_n
	\end{bmatrix} \in \mathbb{H}^n$, $v_j \in \mathbb{H}$ and $\alpha \in \mathbb{H}$, 
let $\alpha v = 
	\begin{bmatrix}
		\alpha v_1\\
		\vdots \\
		\alpha v_n
\end{bmatrix}$. With respect to the standard addition and the scalar multiplication defined above, $\mathbb{H}^n$ is 
a left quaternion vector space over the division ring 
$\mathbb{H}$. Standard linear algebra concepts such as linear independence, 
spanning sets, basis, direct sums, and so on work in exactly the same way as for finite 
dimensional vector spaces over commutative division rings (see \cite{Rodman} for details). The subspace spanned by 
$v_1, v_2, \ldots, v_n \in \mathbb{H}^n$ is denoted by 
$\text{span}_{\mathbb{H}} \{v_1, v_2, \ldots, v_n\} = \{v_1q_1+v_2q_2+ \cdots +v_n q_n : q_i \in \mathbb{H}\}$. 
The inner product on $\mathbb{H}^n$ is defined by 
$\langle u, v \rangle = v^* u$ for $u,v \in \mathbb{H}^n$; that is, for any two vectors 
$u, v \in \mathbb{H}^n$ and $a \in \mathbb{H}$, we have 
$\langle u, v.a \rangle = \langle u, v \rangle .a$ and $\langle u.a, v \rangle 
= \bar{a}. \langle u,v \rangle$. Thus, in particular, $\mathbb{H}$ is a topological space with 
the Euclidean topology , where the norm of an element $q$ is given by $\Vert q \Vert^2 =x_0^2+x_1^2+x_2^2+x_3^2$. 

\medskip

We shall denote by $M_n(\mathbb{C})$ and $M_n(\mathbb{H})$ the rings of 
$n \times n$ matrices with complex and quaternion entries respectively. Recall that any vector 
$x = y_0 + y_1 i + y_2 j+ y_3 k \in \mathbb{H}^n$, where $y_0, y_1, y_2, \ \& \ y_3 \in \mathbb{R}^n$, 
can be uniquely expressed as $x = (y_0 + y_1 i) + (y_2 + y_3 i) j \ = \  x_1 + x_2 j$, where $x_1$ 
and $x_2 \in \mathbb{C}^n$. Similarly, any quaternion matrix $A= B_0 + B_1 i+ B_2 j+ B_3 k 
\in M_n(\mathbb{H})$, where $B_0, B_1, B_2, \ \& \ B_3 \in M_n(\mathbb{R})$,
can be uniquely expressed as $A = (B_0 + B_1 i) + (B_2 + B_3 i) j =  A_1 + A_2 j$, where $A_1$ and 
$A_2  \in M_n(\mathbb{C})$. This suggests that one can associate a complex block matrix to a given
quaternion matrix as given below.

\begin{definition}\label{complex adjoint matrix}
For $A \in M_n(\mathbb{H})$ that is expressed as $A = A_1 + A_2 j$ for some 
$A_1, A_2  \in M_n(\mathbb{C})$, the $2n \times 2n$ block matrix 
$\begin{bmatrix*}
A_1 & A_2\\
- \overline{A_2} & \overline{A_1}
\end{bmatrix*}$ is called the complex adjoint
matrix of $A$ and is denoted by $\bigchi_{A}$.
\end{definition}

It can be easily verified that $A \in M_n(\mathbb{H})$ is invertible if and only if its complex 
adjoint matrix $\bigchi_{A}$ is invertible. The Frobenius norm and the operator norm of a 
quaternion matrix are defined by 
\begin{align*}
||A||_F = \text{trace}(A^*A)^{1/2} \ \text{and} \ 
||A|| = \displaystyle \sup_{x \neq 0} \Bigl\{\dfrac{||Ax||_2}{||x||_2}\Bigr\}
\end{align*} respectively.

We end this section by observing that for $A \in M_n(\mathbb{H})$, we also have the following: 
$(1) \sqrt{2} ||A||_F = ||\bigchi_{A}||_F$ and $(2) ||A|| = ||\bigchi_{A}||$. The paper by Zhang \cite{Zhang} 
gives an excellent account of the theory of matrices over quaternions. 

\section{Definitions and Preliminaries}

Necessary definitions, facts, terminology and theorems used further in the paper appear 
in this section. We begin by stating that throughout the paper, a domain in $\mathbb{H}$ is an open 
connected set with respect to Euclidean topology. The unit ball in $\mathbb{H}$, centered at the origin 
will be denoted by $\mathbb{B}$.

\begin{definition}\label{2-sphere}
The $2$-sphere of quaternionic imaginary units, denoted by, $\mathbb{S}$, is defined as 
$$\mathbb{S}=\{q \in \mathbb{H} : q^2=-1\}=\{x_1 i+x_2 j +x_3 k:x_1^2+x_2^2+x_3^2=1\}.$$
\end{definition}

We shall use the above notation $\mathbb{S}$ throughout this paper to denote only the $2$-sphere. Notice that 
$\mathbb{S}$ belongs to the hyperplane $\{x_0 = 0\}$ and is therefore a $2$-dimensional sphere in $\mathbb{H}$. 
If $I \in \mathbb{S}$, then it can be easily verified that $I^2 = -1$. 
For $I \in \mathbb{S}$, we denote by $L_I$ (or $C_I$) the set $\mathbb{R} + \mathbb{R}I$, which we may 
identify with $\mathbb{C}$. We then define $\Omega_I$ to be the intersection of any domain $\Omega$ 
with $L_I$.

\begin{definition}\label{Slice-Regularity}
Let $U$ be an open subset of $\mathbb{H}$ and $f: U \rightarrow \mathbb{H}$ be a real differentiable function. $f$ is 
said to be left slice regular or left slice hyperholomorphic or simply left regular, if for every $I \in \mathbb{S}$, its restriction 
$f_I$ to $C_I$ satisfies 
$$\overline{\partial}_I f(x + Iy) = \frac{1}{2} \Bigg(\frac{\partial}{\partial x} + I \frac{\partial}{\partial y}\Bigg) 
f_I(x+Iy)I  = 0.$$
\end{definition}

An analagous definition of right regular functions also exist. \textbf{However, we work only with left regular functions 
in this paper}. It can be verified that the collection of all left regular functions 
is a right linear space over $\mathbb{H}$. It turns out that the condition of slice regularity given in Definition \ref{Slice-Regularity} holds if for a fixed 
$I \in \mathbb{S}$ if and only if for every $J \perp I, \ J \in \mathbb{S}$, there exist complex-valued holomorphic 
functions $F, G: U \cap C_I \rightarrow C_I$ such that $f_I(z) = F(z) + G(z)J$ for all $z = x+Iy \in U \cap C_I$. This 
equivalence comes through the {\it Splitting Lemma}, stated below.

\begin{lemma}[Splitting lemma]\label{spl}
Let $f$ be a regular function defined on a domain $\Omega$. Then for any $I \in \mathbb{S}$ and 
any $J \in \mathbb{S}$ with $I \perp J$, there exists two holomorphic functions 
$F, G:\Omega_I \rightarrow L_I$ such that for every $z = x + yI$, 
$$f_I(z) = f|_{\Omega_I}(z) = F(z) + G(z)J.$$
\end{lemma}

With the above definitions, any left polynomial over quaternions is an example of a regular function. Uniformly convergent 
power series also satisfy the conditions of Definition \ref{Slice-Regularity}. That the converse is true over balls centered 
at the origin is given in the following theorem.

\begin{theorem}\label{Slice-Regularity over Balls}
Let $B(0,R)$ be the open ball centered at $0$ of radius $R$ in $\mathbb{H}$. A function $f: B(0,R) \rightarrow \mathbb{H}$ 
is left slice regular if and only if $f$ admits a power series representation $\displaystyle \sum_{n=0}^{\infty} q^n a_n$ that 
converges uniformly on $B(0,R)$.
\end{theorem}

We thus have the following equivalent definition of left regular functions over balls centered at the origin.

\begin{definition}\label{Regular function}
A function $f: \mathbb{B} \rightarrow \mathbb{H}$ is said to be a left regular function if it admits 
a power series representation as $f(q) = \displaystyle \sum_{n=0}^{\infty} q^n f_n$, where 
$f_n \in\mathbb{H}$ and $\overline{\displaystyle{\lim_{n \to \infty}}}\sqrt[n]{|f_n|}\leq 1$.	
\end{definition}

A simple example of a non-polynomial regular function over $\mathbb{H}$ is $\frac{1}{q}$, where 
$0 \neq q \in \mathbb{H}$. There are functions that are not regular too. We shall exhibit one in Example \ref{example-2}.

\medskip

{\bf Remark:} From now on, we use the phrase {\it regular function} only to mean that it is a {\it left regular} function, to 
avoid confusion, although some of the definitions and facts presented below also hold for right regular functions. All of the definitions, facts and theorems stated below can be found in \cite{Gal-Sabadini} or \cite{Gentili-Stoppato-Struppa}.

We now proceed to the definition of a slice domain in $\mathbb{H}$. 

\begin{definition}\label{slice domain}
Let $\mathbb{S}$ be the $2$-sphere from Definition \ref{2-sphere} and let $\Omega$ be a domain in $\mathbb{H}$ 
that intersects the real axis. $\Omega$ is called a slice domain if for all $I \in \mathbb{S}$, 
the intersection $\Omega_I = \Omega \cap L_I$ is a domain in $L_I$.
\end{definition}

Slice domains play an important role in proving results such as the identity principle as well as in 
ensuring continuity of  regular functions in a domain. 

\begin{definition}\label{axially symmetric sets}
A subset $T$ of $\mathbb{H}$ is called axially symmetric if for all points $x + yI \in T$ with 
$x,  y \in \mathbb{R}$ and $I \in \mathbb{S}$ (the $2$-sphere), the set $T$  contains the whole 
sphere $x + y\mathbb{S}$.
\end{definition}

The unit open ball $\mathbb{B}$ is an example of a symmetric slice domain in $\mathbb{H}$. 
Therefore, some of the results we prove for a symmetric slice domain will also be true when the domain 
is the unit ball $\mathbb{B}$. We now state below the identity principle for regular functions on a 
slice domain $\Omega$ (see Theorem $1.12$ of \cite{Gentili-Stoppato-Struppa}).

\begin{theorem}[Identity principle]\label{idp}
Let $f$ and $g$ be regular functions on a slice domain $\Omega$. If $f$ coincides with $g$ on a subset 
of the intersection $\Omega_I = L_I \cap \Omega$ having an accumulation point in $\Omega_I$ 
for some $I$ in the $2$-sphere $\mathbb{S}$, then $f \equiv g$ in $\Omega$.	
\end{theorem}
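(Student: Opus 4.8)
The plan is to reduce everything to the classical identity theorem for holomorphic functions of one complex variable, applied slice by slice, using the real axis as a bridge between different slices. Set $h := f - g$. Since regularity is a slicewise condition and $(f-g)|_J = f|_J - g|_J$ for every $J \in \mathbb{S}$, the function $h$ is regular on $\Omega$, so it suffices to prove that $h \equiv 0$ on $\Omega$.

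First I would treat the distinguished slice $L_I$. Using the Splitting Lemma \ref{spl}, pick $J \in \mathbb{S}$ with $I \perp J$ and write $h_I(z) = F(z) + G(z)J$ on $\Omega_I$, where $F, G : \Omega_I \to L_I$ are holomorphic. Because $\Omega$ is a slice domain, $\Omega_I$ is a domain in $L_I \cong \mathbb{C}$. By hypothesis $h$ vanishes on a subset of $\Omega_I$ having an accumulation point in $\Omega_I$; since $\mathbb{H} = L_I \oplus L_I J$ as a direct sum of real subspaces, both $F$ and $G$ vanish on that subset, whence $F \equiv G \equiv 0$ on $\Omega_I$ by the classical identity theorem. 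Thus $h \equiv 0$ on $\Omega_I$, and in particular $h$ vanishes on $\Omega \cap \mathbb{R}$, which is a nonempty open subset of $\mathbb{R}$ because $\Omega$, being a slice domain, meets the real axis.

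Next I would propagate this vanishing to an arbitrary slice. Fix $K \in \mathbb{S}$, choose $L \in \mathbb{S}$ with $K \perp L$, and apply the Splitting Lemma once more to write $h_K(z) = P(z) + Q(z)L$ on $\Omega_K$ with $P, Q : \Omega_K \to L_K$ holomorphic. For each real point $x \in \Omega \cap \mathbb{R} \subseteq \Omega_K$ we have $h_K(x) = h(x) = 0$, and expanding along the $\mathbb{R}$-basis $\{1, K, L, KL\}$ of $\mathbb{H}$ forces $P(x) = Q(x) = 0$. Since $\Omega \cap \mathbb{R}$ is a nonempty open subset of $\mathbb{R}$, it has an accumulation point lying in $\Omega_K$, which is again a domain in $L_K$ because $\Omega$ is a slice domain; hence the classical identity theorem gives $P \equiv Q \equiv 0$ on $\Omega_K$, that is, $h \equiv 0$ on $\Omega_K$. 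As $\Omega = \bigcup_{K \in \mathbb{S}} \Omega_K$, we conclude $h \equiv 0$ on $\Omega$, i.e., $f \equiv g$.

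The step I expect to be the crux is realizing that one cannot recover $h$ on all of $\Omega$ directly from its values on the single slice $L_I$, since no representation formula is available for a general, not necessarily axially symmetric, slice domain; the slice domain hypothesis is precisely what rescues the argument, and it is used in exactly two essential places --- the connectedness of each $\Omega_J$, which lets the classical identity theorem spread a zero set across an entire slice, and the fact that $\Omega$ meets the real axis, which supplies the common slice through which information is transferred from $L_I$ to an arbitrary $L_K$. Everything else is routine bookkeeping with the coefficient decompositions produced by the Splitting Lemma.
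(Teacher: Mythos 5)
Your argument is correct and is exactly the standard proof of this result (the paper itself does not prove it but cites Theorem~1.12 of \cite{Gentili-Stoppato-Struppa}, where the same two-step argument appears): kill $f-g$ on the distinguished slice via the Splitting Lemma and the classical identity theorem, then propagate through the nonempty open set $\Omega\cap\mathbb{R}$, which lies in every slice $\Omega_K$. Both uses of the slice-domain hypothesis that you single out --- connectedness of each $\Omega_K$ and nonempty intersection with the real axis --- are indeed the essential ones.
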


The following definition is that of a regular matrix-valued function.

\begin{definition}\label{Regular matrix-valued fn}
Let $F: \Omega \rightarrow M_n(\mathbb{H})$ be a quaternionic matrix-valued function defined on a 
domain $\Omega$ in $\mathbb{H}$. $F$ is said to be regular if each $ij$-entry of $F$ is a regular 
function on $\Omega$.
\end{definition}

For example, a left matrix polynomial, which is defined on the whole of $\mathbb{H}$, is a simple 
example of a regular matrix-valued function. In a recent work, Condori \cite{Condori} obtained interesting 
results concerning extensions of the maximum modulus theorem for matrix-valued analytic functions 
on a domain in $\mathbb{C}$. It turns out that the maximum modulus principle holds for regular 
functions of a quaternion variable over a slice domain (see for instance Chapter $7$ of 
\cite{Gentili-Stoppato-Struppa}). The purpose of this paper is to investigate to what extent the maximum 
principle holds for quaternionic matrix-valued regular functions on a domain $\Omega$ in $\mathbb{H}$.

\medskip

We now proceed to give the definitions of a few more standard notions that will be used in the later 
sections. The first of these is the definition of regular extension of $f_I$.
 
\begin{definition}\label{regular extension of f_I}
Let $\Omega$ be a symmetric slice domain and let $I \in \mathbb{S}$. If 
$f_I : \Omega_I \rightarrow \mathbb{H}$ is holomorphic, then there exists a 
unique regular function $g:\Omega \rightarrow \mathbb{H}$ such that $g_I = f_I$ in
$\Omega_I$. Such a function $g$ will be denoted by $ext(f_I)$ and is called the regular 
extension of $f_I$ and given by 
$$g(q) = ext(f_I(x + yJ)) = \frac{1}{2}\Big[f(x + yJ) + f(x - yJ) + IJ [f(x - yJ) - f(x + yJ)]\Big].$$
\end{definition}

The following three definitions (in that order) are those of the regular product of two regular functions, 
regular conjugate and symmetrization of a function and the regular reciprocal of a regular function, respectively.

\begin{definition}\label{regular product}
Let $f,g$ be regular functions on a symmetric slice domain $\Omega$. 
For some $I, J \in \mathbb{S}$ with $I \perp J$, let $F, G, H, K$ be holomorphic functions from 
$\Omega_I$ to $L_I$ such that $f_I = F + GJ, \ \& \ g_I = H + KJ$. Consider the holomorphic function 
defined on $\Omega_I$ by 
 The regular product of $f$ and $g$, denoted by 
$f*g$, is defined to be $f*g = ext(f_I*g_I)$, where 
$$f_I*g_I(z)=[F(z)H(z)-G(z)\overline{K(\bar{z})}]+[F(z)K(z)+G(z)\overline{H(\bar{z})}]J.$$
\end{definition}

\begin{definition}\label{regular conjugate and symmetrization}
Let $f: \mathbb{B}(0,R) \rightarrow \mathbb{H}$ be a regular function defined on a ball of radius $R$ 
and let $f(q) = \displaystyle \sum_{n \in \mathbb{N}}q^n a_n$ be its power series expansion.
The regular conjugate of $f$ is the regular function defined by 
$$f^c(q):= \displaystyle \sum_{n \in \mathbb{N}} q^n \overline{a_n}$$ on the same ball 
$\mathbb{B}(0,R)$. The symmetrization of $f$ is the function $$f^s = f*f^c = f^c*f.$$
\end{definition}

\begin{remark}
For symmetric slice domains, the above definition can be suitably modified, where the conjugate is defined 
as in Definition $1.3.4$ of \cite{Gentili-Stoppato-Struppa}.
\end{remark}

\begin{definition}\label{regular reciprocal}
Let $f:\Omega \rightarrow \mathbb{H}$ be a regular function. Then the regular reciprocal is the 
function $f^{-*}:= \frac{1}{f^s}f^c$.
\end{definition}

The final definition in this section is that of the $\sigma$-ball in $\mathbb{H}$.

\begin{definition}\label{sigma ball}
For all $p,q \in \mathbb{H}$, let 
\begin{equation*}
	\sigma(p,q)=\begin{cases}
		|q-p|, \ \text{if p,q lie on the same complex line $L_I$}\\
		\sqrt{[Re(q)-Re(p)]^2+[Im(q)+Im(p)]^2}, \ \text{otherwise}.
	\end{cases}
\end{equation*}

The $\Sigma$-ball of radius $R > 0$ is the set 
$$\Sigma(p,R)=\{q \in \mathbb{H}: \sigma(q,p)<R\}.$$
\end{definition}

We state the following fact before describing the results obtained in this paper.

\begin{fact}\label{fact-1}
As pointed out in Theorem $2.12$ of \cite{Gentili-Stoppato-Struppa}, if $\Omega$ is an arbitrary domain in 
$\mathbb{H}$, then any regular function will have a power series expansion in each $\Sigma-$ball in $\Omega$. 
\end{fact}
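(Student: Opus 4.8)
The plan is to reduce the statement to the classical complex power series expansion via the Splitting Lemma, reassemble the complex Taylor coefficients into a single quaternionic sequence, and then recognise that the resulting $*$-power series has precisely a $\Sigma$-ball as its natural domain of convergence. Fix $p \in \Omega$ and $R > 0$ with $\Sigma(p,R) \subseteq \Omega$, and choose $I_0 \in \mathbb{S}$ with $p \in L_{I_0}$ (any $I_0$ if $p$ is real, and the one with $\mathrm{Im}(p) \in \mathbb{R}^{+}I_0$ otherwise). Since $\sigma(p,q) = |q-p|$ whenever $q$ lies on the same complex line as $p$, the open disc $\Delta := \{z \in L_{I_0} : |z - p| < R\}$ coincides with $\Sigma(p,R)\cap L_{I_0}$ and therefore lies in $\Omega_{I_0} = \Omega \cap L_{I_0}$.

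First I would apply the Splitting Lemma (Lemma~\ref{spl}): choosing $J \in \mathbb{S}$ with $I_0 \perp J$ gives holomorphic functions $F, G : \Omega_{I_0} \to L_{I_0}$ with $f_{I_0} = F + GJ$. On the disc $\Delta$ these admit ordinary convergent Taylor expansions $F(z) = \sum_n (z-p)^n \alpha_n$ and $G(z) = \sum_n (z-p)^n \beta_n$ with $\alpha_n, \beta_n \in L_{I_0}$ and $\limsup_n |\alpha_n|^{1/n}, \limsup_n |\beta_n|^{1/n} \le 1/R$. Setting $a_n := \alpha_n + \beta_n J$, orthogonality of $\{1, I_0, J, I_0 J\}$ gives $|a_n|^2 = |\alpha_n|^2 + |\beta_n|^2$, hence $\limsup_n |a_n|^{1/n} \le 1/R$, and $f_{I_0}(z) = \sum_n (z-p)^n a_n$ on $\Delta$. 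The candidate expansion is then $g(q) := \sum_{n \ge 0} (q-p)^{*n} a_n$, and it remains to show that $g$ converges to a regular function on $\Sigma(p,R)$ and that $g = f$ there.

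For convergence and regularity, the key estimate — and the reason the $\Sigma$-ball is the correct domain — is $|(q-p)^{*n}| \le \sigma(p,q)^n$ for every $q \in \mathbb{H}$. I would prove this by induction using the evaluation formula for the $*$-product: for a regular $h$ with $h(q) \neq 0$ one has $(h * (q-p))(q) = h(q)\,(\widetilde q - p)$, where $\widetilde q = h(q)^{-1} q\, h(q)$ is a conjugate of $q$ (the case $h(q) = 0$ being trivial); since $\widetilde q$ has the same real part and the same imaginary modulus as $q$, one gets $|\widetilde q - p| \le \sigma(p,q)$, while also $|q - p| \le \sigma(p,q)$, and the induction closes with $h = (q-p)^{*(n-1)}$. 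Together with $\limsup_n |a_n|^{1/n} \le 1/R$ this yields absolute and locally uniform convergence of $\sum_n (q-p)^{*n} a_n$ on $\Sigma(p,R)$; since each summand is a regular function (a $*$-power of the regular function $q - p$, right-multiplied by the constant $a_n$), the limit $g$ is regular on $\Sigma(p,R)$.

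To identify $g$ with $f$ I would invoke the Identity Principle (Theorem~\ref{idp}). If $p$ is real, $\Sigma(p,R)$ is a Euclidean ball; if $p$ is not real, one checks directly that $\Sigma(p,R)$ is connected, that it meets the real axis precisely when $R > |\mathrm{Im}(p)|$, and that each slice $\Sigma(p,R) \cap L_I$ is connected, so $\Sigma(p,R)$ is a slice domain (the degenerate case $R \le |\mathrm{Im}(p)|$ leaves $\Sigma(p,R) = \Delta$ inside a single complex line, where $f = g$ is immediate from the construction). On $\Delta$ one has $(q-p)^{*n}|_\Delta(z) = (z-p)^n$, so $g_{I_0}(z) = \sum_n (z-p)^n a_n = f_{I_0}(z)$; since $f$ and $g$ are both regular on the slice domain $\Sigma(p,R)$ and agree on the open disc $\Delta$, the Identity Principle forces $f \equiv g$ on $\Sigma(p,R)$. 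The step I expect to be the main obstacle is the estimate $|(q-p)^{*n}| \le \sigma(p,q)^n$: it replaces the naive (and false) bound $|q-p|^n$ and is exactly what makes a $\Sigma$-ball, rather than a Euclidean ball, the right domain of convergence; obtaining it cleanly requires the $*$-product evaluation formula together with careful tracking of which conjugate of $q$ appears at each stage, so that the worst case is precisely $\sigma(p,q)$. The auxiliary verification that $\Sigma(p,R)$ is a slice domain is routine but cannot be skipped, since the Identity Principle is available only on slice domains.
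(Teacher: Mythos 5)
The paper does not actually prove this statement: it is recorded as a \emph{Fact} with a pointer to Theorem $2.12$ of Gentili--Stoppato--Struppa, so there is no in-paper argument to compare against. Your reconstruction is the standard proof from that reference: split $f_{I_0}$ into two holomorphic functions via the Splitting Lemma, Taylor-expand them on the disc $\Delta = \Sigma(p,R)\cap L_{I_0}$, reassemble the coefficients as $a_n = \alpha_n + \beta_n J$, and control the $*$-power series through the estimate $|(q-p)^{*n}(q)| \le \sigma(p,q)^n$, which you correctly derive from the evaluation formula $(f*g)(q) = f(q)\,g\bigl(f(q)^{-1}qf(q)\bigr)$ together with the fact that conjugation preserves the real part and the imaginary modulus (and that for $q\in L_{I_0}$ the conjugates collapse to $q$ itself, so the bound is consistent with $\sigma(p,q)=|q-p|$ there). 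That estimate, and your remark that it is exactly what replaces the false bound $|q-p|^n$ and singles out the $\Sigma$-ball, is the heart of the matter and is handled correctly.

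One step does not survive scrutiny as written: when $p$ is not real and $R > |\mathrm{Im}(p)|$, the set $\Sigma(p,R)$ is \emph{not open} in $\mathbb{H}$, hence not a slice domain, and the Identity Principle cannot be invoked on it. Indeed, a point $q_0 \in L_{I_0}$ with $|q_0 - p| < R$ but $(\mathrm{Re}(q_0)-\mathrm{Re}(p))^2 + (|\mathrm{Im}(q_0)|+|\mathrm{Im}(p)|)^2 \ge R^2$ lies in $\Sigma(p,R)$, yet every neighbourhood of it contains points $q \notin L_{I_0}$ with $\sigma(p,q) \ge R$. The repair is routine: set $U = \{q : (\mathrm{Re}(q)-\mathrm{Re}(p))^2 + (|\mathrm{Im}(q)|+|\mathrm{Im}(p)|)^2 < R^2\}$, which is open, axially symmetric, meets the real axis when $R>|\mathrm{Im}(p)|$, has connected slices, and satisfies $\Sigma(p,R) = U \cup \Delta$. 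Apply the Identity Principle to $f$ and $g$ on the slice domain $U$ (they agree on $U \cap L_{I_0} \subseteq \Delta$), and observe that on the remaining points of $\Delta$ the equality $f = g$ holds directly from the one-variable Taylor expansion, since on $L_{I_0}$ everything commutes and $(q-p)^{*n}$ restricts to $(z-p)^n$. With that adjustment (and your already-correct treatment of the degenerate case $R \le |\mathrm{Im}(p)|$, where $\Sigma(p,R)=\Delta$), the argument is complete.
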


A brief description of the results obtained are as below. After a brief introduction, the necessary 
definitions and preliminaries are brought out. This is followed by the section on main results, which is subdivided into 
four subsections for ease of reading. The first of these concerns maximum principles for Frobenius and operator norms of 
matrix-valued functions (Theorems \ref{Max - Frobenius norm case} and \ref{thm2}). Maximizing vectors (Definition \ref{Maximizing Vector}) and an SVD type decomposition result for matrix-valued regular functions are brought out next (Theorem \ref{decomposition-thm}). We then discuss maximum principles involving 
singular values of matrix-valued functions (Theorem \ref{maximum princ - svd}). Finite Blaschke products over $\mathbb{H}$ are introduced next (Definition \ref{Finite-Blaschke}). The last subsection concerns approximating a regular function $f: \mathbb{B} \rightarrow \overline{\mathbb{B}}$ that are continuous on $\partial \mathbb{B}$ by convex combinations of finite Blaschke products (Theorem \ref{Fisher-type Theorem}). We end the paper by proving that an 
$n \times n$ matrix-valued regular function on the quaternionic unit ball $\mathbb{B}$, where each entry satisfies the 
above mentioned condition can be aproximated by matrix-valued function finite Blaschke products 
(Theorem \ref{consequence of Fisher type thm}). Wherever possible, we provide examples 
to illustrate the results obtained.
 
\section{Main Results}

The main results are presented in this section. This section is subdivided into subsections, for 
ease of reading.

\subsection{The Frobenius and Operator norm case} \hspace*{0.5cm}

We begin by proving - very similar to the complex case 
(see Theorem $1$, \cite{Condori}) - that the maximum modulus principle holds for quaternionic 
matrix-valued regular functions when working with the Frobenius norm.

\begin{theorem}\label{Max - Frobenius norm case}
Let $\Omega$ be a domain in $\mathbb{H}$ and $F: \Omega \rightarrow M_n(\mathbb{H})$ be a quaternionic 
matrix-valued regular function. Suppose there exists a point $q_0 \in \Omega_I 
(=\Omega \cap L_I)$ for some $I \in \mathbb{S}$ such that $\Vert F(q)\Vert_F \leq F(q_0)$  
for every $q \in \Omega$. Then $F(q)$ is a constant.
\end{theorem}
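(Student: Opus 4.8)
The plan is to slice $F$ at the complex line through $q_0$, turn the Frobenius norm into a sum of squared moduli of one-variable holomorphic functions, invoke the classical maximum principle to see that all of these are constant on that slice, and finally propagate constancy to all of $\Omega$. (Here I read the displayed hypothesis as $\Vert F(q)\Vert_F\le\Vert F(q_0)\Vert_F$ for all $q\in\Omega$.)

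First I would note that $\Vert F(q)\Vert_F^{2}=\operatorname{trace}\bigl(F(q)^{*}F(q)\bigr)=\sum_{i,j}|F_{ij}(q)|^{2}$, where each entry $F_{ij}$ is a regular function. Fix $I\in\mathbb S$ with $q_0\in L_I$ and choose $J\in\mathbb S$ with $I\perp J$. Applying the Splitting Lemma (Lemma \ref{spl}) to every entry gives holomorphic functions $G_{ij},H_{ij}\colon\Omega_I\to L_I$ with $F_{ij}|_I=G_{ij}+H_{ij}J$ on $\Omega_I$. Since $1,I,J,IJ$ are mutually orthogonal imaginary-part directions, $|a+bJ|^{2}=|a|^{2}+|b|^{2}$ for $a,b\in L_I$, so for every $z\in\Omega_I$ we obtain $\Vert F(z)\Vert_F^{2}=\sum_{i,j}\bigl(|G_{ij}(z)|^{2}+|H_{ij}(z)|^{2}\bigr)$. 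Thus, under the identification $L_I\cong\mathbb C$, the function $z\mapsto\Vert F(z)\Vert_F^{2}$ is a finite sum of squared moduli of holomorphic functions on the planar open set $\Omega_I$, hence subharmonic there.

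Next, the hypothesis says this subharmonic function attains its supremum $\Vert F(q_0)\Vert_F^{2}$ at the interior point $q_0$, so by the maximum principle for subharmonic functions it is constant on the connected component $U$ of $\Omega_I$ containing $q_0$; this is precisely the one-variable content of Condori's Theorem~1 in \cite{Condori}. Each $|G_{ij}|^{2}$ and $|H_{ij}|^{2}$ is then a nonnegative subharmonic function whose complement in this constant sum is again subharmonic, so each is also superharmonic, hence harmonic on $U$; since $\Delta|g|^{2}=4|g'|^{2}$ for holomorphic $g$, every $G_{ij}$ and $H_{ij}$, and therefore $F|_I=(G_{ij}+H_{ij}J)$, is constant on $U$, say equal to $C:=F(q_0)$.

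Finally I would propagate constancy from $U$ to all of $\Omega$. If $\Omega$ is a slice domain then $U=\Omega_I$ and the Identity Principle (Theorem \ref{idp}) gives $F\equiv C$ on $\Omega$ immediately. For an arbitrary domain one instead uses Fact \ref{fact-1}: choosing a $\Sigma$-ball contained in $\Omega$ on which each $F_{ij}$ admits a convergent power series and which meets $U$ in a set with an accumulation point, comparison of coefficients forces $F\equiv C$ on that $\Sigma$-ball, and a connectedness argument chaining $\Sigma$-balls along paths in $\Omega$ spreads $F\equiv C$ over all of $\Omega$. I expect this last step to be the main obstacle: for a general, non-slice, non-symmetric domain the slice $\Omega_I$ may be disconnected and Theorem \ref{idp} is not directly available, so the identity principle must be re-run through the $\Sigma$-ball power series; everything preceding it is essentially the complex-variable argument transplanted via the Splitting Lemma.
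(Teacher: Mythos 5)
Your proposal is correct and follows essentially the same route as the paper: restrict to the slice $\Omega_I$ through $q_0$, use the Splitting Lemma (Lemma \ref{spl}) to convert the quaternionic entries into pairs of holomorphic functions, observe that $\Vert F\Vert_F^2$ becomes a sum of squared moduli of holomorphic functions, and invoke the complex Frobenius-norm maximum principle. The only difference is packaging: the paper assembles the split entries into the $2n\times 2n$ complex adjoint matrix $\widetilde F$, notes $\Vert\widetilde F\Vert_F^2=2\Vert F\Vert_F^2$, and cites Theorem~1 of \cite{Condori} as a black box, whereas you inline the proof of that theorem (subharmonicity of $\sum|G_{ij}|^2+|H_{ij}|^2$, harmonicity of each summand, $\Delta|g|^2=4|g'|^2$); these are the same computation. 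You are also more careful than the paper about the final propagation from constancy on $\Omega_I$ to constancy on $\Omega$ (the paper is silent on this step); your identity-principle argument is fine for slice domains, while your $\Sigma$-ball chaining sketch for a truly arbitrary domain still has a real subtlety --- a $\Sigma$-ball centered far from the real axis need not contain a Euclidean neighborhood of its center, and the overlap of two $\Sigma$-balls need not meet the relevant complex line in a set with an accumulation point --- but this is a defect of the theorem's hypotheses as stated, not of your argument relative to the paper's.
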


\begin{proof}
Let the complex adjoint matrix of $F(q) = G(q) + H(q)j$ be denoted by $\widetilde{F}(q)$. The Frobenius 
norm of $F(q)$ is, 
\begin{equation*}
\Vert F(q) \Vert_F^2= \text{trace}(F(q)F(q)^*)=|f_{11}|^2+|f_{12}|^2+|f_{21}|^2+|f_{22}|^2.
\end{equation*} 
We now calculate the Frobenius norm of the complex adjoint matrix $\widetilde{F}(q)$. Then we get
\begin{equation*}
\Vert\widetilde{F}(q)\Vert_F^2=2(|g_{11}|^2+|g_{12}|^2+|g_{21}|^2+|g_{22}|^2+|h_{11}|^2+
|h_{12}|^2+|h_{21}|^2+|h_{22}|^2).
\end{equation*}
	
We also have that $f_{mn} = g_{mn} + h_{mn}j$ so that $|f_{mn}|^2 = |g_{mn}|^2 + |h_{mn}|^2$. 
Therefore, $\Vert \widetilde{F}(q) \Vert_F^2 = 2(\Vert F(q) \Vert_F^2)$ and hence 
$\Vert F(q)\Vert_F \leq F(q_0)$, thereby implying that 
$\Vert \widetilde{F}(q) \Vert_F \leq \sqrt{2} F(q_0)$. Then, by the maximum Frobenius norm principle for 
matrix-valued analytic functions, as proved in Theorem $1$ of \cite{Condori}, it follows that $\widetilde{F}(q)$ is 
constant and therefore $F(q)$ is also a constant.
\end{proof}

The above result need not be true in general for any matrix norm. The example given in \cite{Condori} 
works in the quaternion case too. Before proceeding further we state the Hahn-Banach theorem and one of its 
consequences proved by Suhomlinov for the quaternion case (see \cite{Suhomlinov} or Theorem $4.10.1$ \ \& 
Corollary $4.10.2$ of \cite{Colombo-Sabadini-Struppa-2} for details). 

\begin{theorem}\label{Suhomlinov-thm} 
Let $L \subset \mathbb{H}$ be a left linear space over $\mathbb{H}$ and $M$ a subspace of $L$.	
Suppose	$f: M \rightarrow \mathbb{H}$ is a left linear functional whose domain is $M$ and has the 
property $|f(q)|\leq p(q)$ for all $q \in M$, where $p: L \rightarrow \mathbb{R}$ is a real valued 
functional defined on all of $L$ with the properties:
\begin{itemize}
\item[(1)] $p(q)\geq 0$,
\item[(2)] $p(q+r)\leq p(q) + p(r)$, and
\item[(3)] $p(\alpha q) = p(q)|\alpha|$.
\end{itemize} 
Then there exists an extension $F$ of $f$ to all of $L$ with the property that $|F(q)| \leq  p(q)$. 
Consequently, if $L$ is a normed linear space, $q_0$ a non-zero element of $L$, then there exists a 
continuous linear functional $T$ on $L$ such that $T(q_0 ) = ||q_0||$ and $||T||=1$.
\end{theorem}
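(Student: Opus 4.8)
The plan is to reduce the quaternionic assertion to the classical real Hahn--Banach theorem, imitating the reduction of the complex case to the real one but now carrying along all four real coordinates of $\mathbb{H}$. First, regarding $L$ as a real vector space, note that properties $(1)$--$(3)$ make $p$ a sublinear functional on $L$ in the real sense: $p(\lambda q) = \lambda\, p(q)$ for $\lambda \ge 0$ by $(3)$, and $p(q+r) \le p(q) + p(r)$ by $(2)$. Write $f = f_0 + f_1 i + f_2 j + f_3 k$ with each $f_m : M \to \mathbb{R}$ real-linear, and set $u := f_0 = \operatorname{Re}\circ f$. Using left $\mathbb{H}$-linearity of $f$ (that is, $f(iq) = i f(q)$, $f(jq) = j f(q)$, $f(kq) = k f(q)$) together with the multiplication table of $\mathbb{H}$, one reads off $f_1(q) = -u(iq)$, $f_2(q) = -u(jq)$, $f_3(q) = -u(kq)$, and hence
\[
f(q) = u(q) - u(iq)\,i - u(jq)\,j - u(kq)\,k \qquad (q \in M).
\]
Since $u(q) \le |f(q)| \le p(q)$ on $M$ and $u$ is real-linear, the real Hahn--Banach theorem furnishes a real-linear $U : L \to \mathbb{R}$ with $U|_M = u$ and $U(q) \le p(q)$ for all $q \in L$.

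Next I would define the candidate extension by the same formula,
\[
F(q) := U(q) - U(iq)\,i - U(jq)\,j - U(kq)\,k \qquad (q \in L).
\]
Since $M$ is a subspace, $iq, jq, kq \in M$ whenever $q \in M$, so $F$ restricts to $f$ on $M$, and $F$ is plainly real-linear. The one substantive algebraic point is that $F$ is in fact left $\mathbb{H}$-linear, for which it suffices to verify $F(iq) = i F(q)$, $F(jq) = j F(q)$ and $F(kq) = k F(q)$. Each is a short direct computation from $i^2 = j^2 = k^2 = -1$, $ij = k$, $jk = i$, $ki = j$; for instance one gets $F(iq) = U(iq) + U(q)i + U(kq)j - U(jq)k = i F(q)$, and the remaining two are entirely analogous. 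The precise signs in the formula for $F$ are exactly those forced by the representation of $f$ obtained in the previous step.

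For the norm estimate I would use a rotation argument that invokes property $(3)$ with a genuinely quaternionic scalar. Fix $q \in L$; if $F(q) = 0$ there is nothing to prove, as $p(q) \ge 0$. Otherwise set $\beta := \overline{F(q)}\,|F(q)|^{-1}$, so that $|\beta| = 1$ and $\beta F(q) = |F(q)|$. By left linearity $F(\beta q) = \beta F(q) = |F(q)|$ is a nonnegative real number, so its imaginary components vanish and $F(\beta q) = U(\beta q)$; therefore
\[
|F(q)| = U(\beta q) \le p(\beta q) = |\beta|\,p(q) = p(q),
\]
which proves the first assertion. For the consequence, given a normed left $\mathbb{H}$-space $L$ and $q_0 \ne 0$, take $M = \operatorname{span}_{\mathbb{H}}\{q_0\}$ and define $f(\alpha q_0) := \alpha\,\|q_0\|$; this is well defined (since $\alpha q_0 = 0 \iff \alpha = 0$), left linear, and satisfies $|f(\alpha q_0)| = |\alpha|\,\|q_0\| = \|\alpha q_0\|$. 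Applying the theorem just proved with $p = \|\cdot\|$ gives a left-linear $T : L \to \mathbb{H}$ extending $f$ with $|T(q)| \le \|q\|$ for all $q$, so $T$ is bounded, hence continuous, with $\|T\| \le 1$; and $T(q_0) = \|q_0\|$ forces $\|T\| \ge 1$, whence $\|T\| = 1$.

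The step I expect to be the main obstacle is the verification that $F$ is left $\mathbb{H}$-linear and not merely real-linear. In the complex Hahn--Banach argument one only has to check homogeneity with respect to $i$, whereas here the noncommutativity of $\mathbb{H}$ couples the three conditions $F(iq) = iF(q)$, $F(jq) = jF(q)$, $F(kq) = kF(q)$, and one must pin down the signs in the defining formula for $F$ correctly; this is the single place where the algebraic structure of $\mathbb{H}$ enters in an essential way. The reduction to real Hahn--Banach, the sublinearity of $p$, and the rotation estimate are then routine.
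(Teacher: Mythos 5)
Your proof is correct. The paper does not prove this statement at all --- it is quoted as a known result of Suhomlinov, with a pointer to \cite{Suhomlinov} and to Theorem 4.10.1 and Corollary 4.10.2 of \cite{Colombo-Sabadini-Struppa-2} --- and your argument is precisely the standard reduction used there: recover $f$ from its real part via $f(q)=u(q)-u(iq)i-u(jq)j-u(kq)k$, extend $u$ by the real Hahn--Banach theorem, rebuild $F$ by the same formula, and obtain $|F(q)|\le p(q)$ by the unit-modulus rotation $\beta=\overline{F(q)}\,|F(q)|^{-1}$ together with property (3). I checked the sign conventions (e.g.\ $F(iq)=U(iq)+U(q)i+U(kq)j-U(jq)k=iF(q)$ using $ji=-k$, $ki=j$) and the derivation of the corollary from $M=\operatorname{span}_{\mathbb{H}}\{q_0\}$; both are right.
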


We now use the above theorem to prove our next result, which is again along the same lines as 
that of Theorem $2$ from \cite{Condori}. 

 \begin{theorem}\label{thm2}
Let $\mathbb{B}$ be the unit ball centered at the origin in $\mathbb{H}$ and 
$F:\mathbb{B} \rightarrow M_n(\mathbb{H})$ be a quaternionic matrix-valued regular function. If $\Vert F(q) \Vert$ 
attains its maximum in $\mathbb{B}$, then $\Vert F(q) \Vert$ is a constant on $\mathbb{B}$.
\end{theorem}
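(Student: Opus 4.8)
The plan is to reduce the operator-norm statement to a scalar maximum modulus principle for regular functions by pairing $F$ against fixed unit vectors, exactly as in Condori's complex argument but with the Hahn-Banach/Suhomlinov substitute (Theorem \ref{Suhomlinov-thm}) playing the role of the complex Hahn-Banach theorem. Concretely, suppose $\Vert F(q)\Vert$ attains its maximum $M$ at some point $q_0 \in \mathbb{B}$. Pick a unit vector $u \in \mathbb{H}^n$ with $\Vert F(q_0)u\Vert_2 = M$ (such a maximizing vector exists by compactness of the unit sphere in $\mathbb{H}^n$ and continuity of $x \mapsto \Vert F(q_0)x\Vert_2$), and then, using the second assertion of Theorem \ref{Suhomlinov-thm} applied to the normed left $\mathbb{H}$-space $\mathbb{H}^n$ and the nonzero element $F(q_0)u$, choose a continuous left linear functional $T$ on $\mathbb{H}^n$ with $T(F(q_0)u) = \Vert F(q_0)u\Vert_2 = M$ and $\Vert T\Vert = 1$. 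Define the scalar-valued function $\varphi(q) := T(F(q)u)$ on $\mathbb{B}$.

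The next step is to check that $\varphi$ is a regular function of the quaternion variable $q$. Since each entry $f_{mn}$ of $F$ is regular and $u$ has constant entries, each entry of the column vector $F(q)u$ is an $\mathbb{H}$-linear combination (on the appropriate side) of regular functions, hence regular; applying the fixed left linear functional $T$ again produces an $\mathbb{H}$-combination of these, which is regular. (Here one must be a little careful about sides — regular functions are a left $\mathbb{H}$-module only in a restricted sense — but because $T$ and $u$ have constant quaternion coefficients the combination stays within the regular class; this is the one place where I would want to write the bookkeeping out carefully, perhaps by passing to a splitting $f_I = F_0 + G_0 J$ via Lemma \ref{spl} on each slice and observing that $\varphi|_I$ is then genuinely complex-holomorphic on $\mathbb{B}_I$.) Granting regularity of $\varphi$, we have for every $q \in \mathbb{B}$
\begin{equation*}
|\varphi(q)| = |T(F(q)u)| \leq \Vert T\Vert\, \Vert F(q)u\Vert_2 \leq \Vert F(q)\Vert \leq M = |\varphi(q_0)|,
\end{equation*}
so the scalar regular function $\varphi$ attains its maximum modulus at the interior point $q_0 \in \mathbb{B}$. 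By the maximum modulus principle for regular functions on a slice domain (Chapter $7$ of \cite{Gentili-Stoppato-Struppa}; $\mathbb{B}$ is a symmetric slice domain), $\varphi$ is constant on $\mathbb{B}$, so $|T(F(q)u)| = M$ for all $q$.

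Finally I would promote this to constancy of $\Vert F(q)\Vert$ itself. From $M = |T(F(q)u)| \leq \Vert F(q)u\Vert_2 \leq \Vert F(q)\Vert$ we already get $\Vert F(q)\Vert \geq M$ for all $q \in \mathbb{B}$; combined with $\Vert F(q)\Vert \leq M$ (as $q_0$ is the maximizer) this forces $\Vert F(q)\Vert \equiv M$, which is the claim. I expect the main obstacle to be the regularity check in the second paragraph: one needs to be sure that applying a left $\mathbb{H}$-linear functional to a vector of left regular functions yields a left regular function, and that the notion of ``maximizing vector'' and the functional $T$ can be chosen once and for all (independent of $q$) — the existence of $T$ is exactly what Suhomlinov's form of Hahn-Banach supplies, while independence of $q$ is automatic since $T$ is pinned down only by $F(q_0)u$. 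Everything else is a direct transcription of the complex proof, with the scalar maximum modulus theorem for holomorphic functions replaced by its regular-function analogue.
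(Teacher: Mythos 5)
Your proposal is correct and follows essentially the same route as the paper: use Suhomlinov's Hahn--Banach theorem to produce a norm-one left linear functional attaining $\Vert F(q_0)\Vert$, reduce to the scalar maximum modulus principle for regular functions, and then reverse the inequality chain. The only cosmetic difference is that the paper applies the functional directly on $M_n(\mathbb{H})$ at $F(q_0)$ rather than first composing with a maximizing vector $u$, and the regularity worry you flag is settled exactly by the power-series bookkeeping you sketch, namely $T\bigl(\sum_{n} q^n F_n\bigr) = \sum_{n} q^n T(F_n)$.
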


\begin{proof}
Let $q_0 \in \mathbb{B}$ be such that $\Vert F(q) \Vert \leq \Vert F(q_0) \Vert$ for all $q \in \mathbb{B}$. 
Let $\Vert F(q_0)\Vert \neq 0$. From Theorem \ref{Suhomlinov-thm}, it follows that there is a bounded 
linear functional $T: M_n(\mathbb{H}) \rightarrow \mathbb{H}$ such that 
$$T(F(q_0))=\Vert F(q_0) \Vert \text{ and } \Vert T \Vert =1.$$ 
Since $F$ is left regular on $\mathbb{B}$, it admits a power series expansion as 
$$F(q)=\sum_{n=0}^{\infty} q^n F_n.$$ $T$ being a bounded (left) linear transformation, we have 
$$T(F(q)) = T(\sum_{n=0}^{\infty} q^n F_n) = \sum_{n=0}^{\infty} q^n T(F_n).$$ 
Thus, $T(F):\mathbb{B} \rightarrow \mathbb{H}$ is regular on $\mathbb{B}$, and
\begin{align*}
		|T(F(q))|& \leq \Vert F(q) \Vert\\
		& \leq \Vert F(q_0)\Vert\\
		& = T(F(q_0)).
\end{align*} 
It now follows from the maximum principle for regular functions of a quaternion variable 
(see Theorem $7.1$ of  \cite{Gentili-Stoppato-Struppa}) that $T(F(q))$ is constant on $\mathbb{B}$. 
Then, 
$$\Vert F(q) \Vert \geq |T(F(q))|=|T(F(q_0))|=\Vert F(q_0) \Vert \geq \Vert F(q) \Vert \ 
\text{for all} \ \ q \in \mathbb{B}.$$ 
Hence $\Vert F(q) \Vert$ is a constant on $\mathbb{B}$.
\end{proof}

\begin{remark}\label{rem-1}
As pointed out in Theorem $2.12$ of \cite{Gentili-Stoppato-Struppa}, if we work with an arbitrary domain 
$\Omega$ in $\mathbb{H}$, then any regular function will have a power series expansion in each $\Sigma-$ball in 
$\Omega$ and in such a case, the proof of Theorem \ref{thm2} can be modified appropriately.
\end{remark}

\subsection{Maximizing vectors and a decomposition theorem} \hspace*{0.5cm}

We now define the notion of a maximizing vector over quaternions and prove a maximum operator norm principle for a quaternionic matrix-valued function. Throughout this section, the notation 
$\Vert \cdot \Vert$ will denote the operator norm of the matrix-valued function.

\begin{definition}\label{Maximizing Vector}
A unit vector $q_0 \in \mathbb{H}^n$ is called a maximizing vector for $A \in M_n(\mathbb{H})$ if 
$$\Vert Aq_0 \Vert_2 = \Vert A \Vert.$$
\end{definition}

$\mathbb{H}$ being a Euclidean space, the unit ball in $\mathbb{H}$ is a compact set. This assures that 
the maximizing vector always exists. We now prove that if $F(q)$ is a non constant matrix valued 
regular function whose operator norm attains its maximum at some point in $\Omega_I$, then there 
exists a vector $x_0$ such that $F(q)x_0$ is a constant. 

\begin{theorem}\label{thm3}
Let $\Omega$ be a slice domain in $\mathbb{H}$ and $F:\Omega \rightarrow M_n(\mathbb{H})$ 
a regular matrix-valued function. If there exists a $q_0$ in $\Omega_I$ such that 
$\Vert F(q)\Vert \leq \Vert F(q_0)\Vert$ for all $q \in \Omega$ and $x_0$ is the maximizing vector for $F(q_0)$, 
then $F^{(k)}(q_0)x_ 0 = 0$ for all $k\geq 1$.
\end{theorem}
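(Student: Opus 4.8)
The plan is to reduce to the scalar maximum principle by testing $F$ against the maximizing vector $x_0$, much as the proof of Theorem \ref{thm2} tested against a Hahn--Banach functional. If $F(q_0)=0$ then $\Vert F(q)\Vert\le 0$ on $\Omega$, forcing $F\equiv 0$, and there is nothing to prove; so assume $M:=\Vert F(q_0)\Vert=\Vert F(q_0)x_0\Vert_2>0$. Put $G(q):=F(q)x_0\in\mathbb{H}^n$. Each component $G_i=\sum_j F_{ij}(\cdot)(x_0)_j$ is a right-$\mathbb{H}$-linear combination of the regular functions $F_{ij}$, hence regular; that is, $G$ is a vector-valued regular function, and since right multiplication by a constant commutes with the operation $F\mapsto F^{k}$ (the slice derivative, equivalently the passage to power-series coefficients) we have $F^{k}(q_0)x_0=G^{k}(q_0)$ for every $k$. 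Hence it suffices to prove that $G$ is constant on $\Omega$. Using $\Vert x_0\Vert_2=1$, the definition of a maximizing vector, and the hypothesis, we get for all $q\in\Omega$
\[
\Vert G(q)\Vert_2=\Vert F(q)x_0\Vert_2\le\Vert F(q)\Vert\le\Vert F(q_0)\Vert=\Vert F(q_0)x_0\Vert_2=M=\Vert G(q_0)\Vert_2 ,
\]
so $q\mapsto\Vert G(q)\Vert_2$ attains its global maximum over $\Omega$ at $q_0\in\Omega_I$.

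Next I would pass to the slice $\Omega_I=\Omega\cap L_I$, which is a domain because $\Omega$ is a slice domain. Fix $J\in\mathbb{S}$ with $I\perp J$ and apply the Splitting Lemma (Lemma \ref{spl}) to each coordinate, writing $G_i|_{\Omega_I}=\phi_i+\psi_i J$ with $\phi_i,\psi_i:\Omega_I\to L_I$ holomorphic. Since $I,J,IJ$ are orthonormal imaginary units we have $|G_i(z)|^2=|\phi_i(z)|^2+|\psi_i(z)|^2$, so that for $z\in\Omega_I$
\[
\Vert G(z)\Vert_2^2=\sum_{i=1}^n\bigl(|\phi_i(z)|^2+|\psi_i(z)|^2\bigr),
\]
which exhibits $\Vert G(\cdot)\Vert_2^2$ as a sum of $2n$ nonnegative subharmonic functions on $\Omega_I$. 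Being subharmonic and attaining its maximum at the interior point $q_0$, this sum equals the constant $M^2$ throughout $\Omega_I$ by the strong maximum principle for subharmonic functions; consequently each summand, being $M^2$ minus a sum of subharmonic functions, is superharmonic as well as subharmonic, hence harmonic. As $\Delta|h|^2=4|h'|^2$ for a holomorphic $h$, every $\phi_i$ and every $\psi_i$ has zero derivative, hence is constant on the connected set $\Omega_I$; therefore $G|_{\Omega_I}$ is constant, and the identity principle (Theorem \ref{idp}), applied componentwise, gives $G\equiv G(q_0)$ on $\Omega$. In particular $F^{k}(q_0)x_0=G^{k}(q_0)=0$ for every $k\ge 1$, which proves the theorem (and yields $F(q)x_0\equiv F(q_0)x_0$, the statement recorded just before it).

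The step I expect to be the real obstacle is the implication ``$\Vert G(\cdot)\Vert_2$ constant $\Rightarrow$ $G$ constant''. Over $\mathbb{C}$ it can be dispatched quickly (maximum modulus principle, or the equality case of Cauchy--Schwarz), but over $\mathbb{H}$ those shortcuts are unavailable, since the bounded left linear functionals on $\mathbb{H}^n$ are not inner-product functionals; one must genuinely descend to a slice and use the holomorphic decomposition, as above. One could alternatively, in the spirit of Theorem \ref{thm2}, choose via Theorem \ref{Suhomlinov-thm} a bounded left linear $T$ on $\mathbb{H}^n$ with $\Vert T\Vert=1$ and $T(G(q_0))=\Vert G(q_0)\Vert_2$, observe that $T\circ G$ is scalar regular with $|T(G(q))|\le\Vert G(q)\Vert_2$, and apply the scalar maximum modulus principle to deduce that $\Vert G(\cdot)\Vert_2$ is constant; but this does not bypass the slice argument, and one must still check the two elementary points that $G=Fx_0$ is regular because of right, not left, scalar multiplication, and that $F^{k}(q_0)x_0=(Fx_0)^{k}(q_0)$.
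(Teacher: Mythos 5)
Your proof is correct, but it reaches the conclusion by a different mechanism than the paper. The paper expands $F$ in a power series $F(q)=\sum_{n}(q-q_0)^{*n}F_n$ on a $\Sigma$-ball, restricts to a circle $q=q_0+re^{It}$ in the slice $L_I$, and uses a Parseval/Gutzmer-type identity
\[
\sum_{k\ge 0}\Vert F_kx_0\Vert_2^2\,r^{2k}=\frac{1}{2\pi}\int_{0}^{2\pi}\Vert F(q_0+re^{It})x_0\Vert_2^2\,dt\le\Vert F_0\Vert^2,
\]
so that the $k=0$ term already saturates the bound and all higher coefficients $F_kx_0$ must vanish --- which is literally the assertion $F^{k}(q_0)x_0=0$, with the constancy of $F(q)x_0$ on the slice disk and the identity principle coming afterwards. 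You instead form $G=Fx_0$, observe that $\Vert G\Vert_2^2$ restricted to $\Omega_I$ is, via the Splitting Lemma, a finite sum of squared moduli of holomorphic functions, hence subharmonic, and then invoke the strong maximum principle for subharmonic functions together with $\Delta|h|^2=4|h'|^2$ to force each holomorphic piece to be constant; the vanishing of the derivatives is read off at the end. Both arguments descend to a slice and close with the identity principle, and in essence they are the two classical proofs of the same one-variable fact (coefficient/Parseval estimate versus interior maximum of a plurisubharmonic norm). Your route avoids manipulating the $*$-power series $(q-q_0)^{*n}$ and the double-sum computation of $\Vert F(q)x\Vert_2^2$ (which in the paper is the only genuinely quaternionic bookkeeping), at the cost of importing subharmonic function theory; the paper's route produces the Taylor coefficients $F_kx_0$ directly, which matches the phrasing of the statement more closely. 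Your preliminary checks --- that $q\mapsto F(q)c$ is regular because the scalar acts on the right, that $|{\phi+\psi J}|^2=|\phi|^2+|\psi|^2$ for $I\perp J$, and that right multiplication by $x_0$ commutes with the slice derivative --- are exactly the points that need to be verified and are handled correctly.
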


\begin{proof}
Let the assumptions stated above hold. Choose $R > 0$ such that $\Sigma(q_0,R)$ contained in 
$\Omega$ such that 
$$F(q) = \displaystyle \sum_{n \in \mathbb{N}}(q - q_0)^{*n}F_n,$$ where $F_n \in M_n(\mathbb{H})$. 
Since $\mathbb{H}$ is a left Hilbert space, we have for any $x \in \mathbb{H}$, 
\begin{align*}
||F(q)x||_2^{2} & = ||\displaystyle \sum_{n \in \mathbb{N}}(q-q_0)^{*n}F_n x||_2^{2}\\
& = \left \langle \displaystyle \sum_{n \in \mathbb{N}}(q - q_0)^{*n}F_n x, 
\displaystyle \sum_{k \in \mathbb{N}}(q - q_0)^{*k} F_k x \right\rangle\\
& = \displaystyle \sum_{n \in \mathbb{N}}\displaystyle \sum_{k \in \mathbb{N}}(q - q_0)^{*n} 
 \langle F_nx,F_kx \rangle \overline{(q-q_0)^{*k}}.
\end{align*}

Let $\delta_I(q_0,R) = \Sigma(q_0,R) \cap L_I \subseteq \Sigma(q_0,R)$ be the disk in $L_I$ of radius $R$ and 
centered at $q_0$. Then for any $q \in \delta_I(q_0,R)$, choose $r < R$ such that $q = q_0 + re^{it}$. We then have, 
\begin{equation*}
\displaystyle \sum_{k=0}^\infty ||F_k x||_2^{2} r^{2k}=
\frac{1}{2\pi}\int_{\partial \delta_I(q_0,R)} ||F(q_0+re^{it})x||_2^{2} dt.
\end{equation*}

By virtue of a regular power series representation of $F(q)$ , 
$\Vert F(q) \Vert \leq \Vert F(q_0)\Vert=\Vert F_0 \Vert$ for all $q \in \delta_I(q_0,R)$ 
(see \cite{Gentili-Stoppato-Struppa}); in particular, for the vector $x$ and $r < R$, it follows that 
$$\displaystyle \sum_{k=0}^\infty ||F_k x||_2^{2} r^{2k} = \frac{1}{2\pi}\int_{\partial \delta_I(q_0,R)}
||F(q_0+re^{it})x||_2^{2} dt \leq \Vert F_0 \Vert^2 ||x||_2^{2}.$$

Replacing $x$ by the maximizing vector $x_0$ of $F(q_0)$, we get
$$\Vert F_0 \Vert^2 +\displaystyle \sum_{k=1}^{\infty} ||F_k x_0||_2^{2} r^{2k}\leq 
||F_0||^2 ||x_0||_2^{2} = ||F_0||^2.$$  
This implies that $F_k x_0 = 0$ for all $k\geq 1$ and hence $F^{(k)}(z_0) x_0 = 0$ for all $k\geq 1$ 
and hence $F(q)x_0 = F_0x_0 = F(q_0)x_0$ for all $q \in \delta_I(q_0,R)$. 
We finally conclude from the identity principle (Theorem \ref{idp}) that $F(q)x_0$ is constant 
on $\Omega$, as required.
\end{proof}

\medskip

We now proceed to prove a decomposition for matrix-valued regular functions that is very similar to the singular value decomposition for complex matrices. We restrict our domain to be a symmetric slice 
domain in $\mathbb{H}$. We begin with a lemma, whose proof is routine.

\begin{lemma}\label{operator norm - result}
Let $X$ and $Y$ be two $n \times n$ unitary matrices in $\mathbb{H}$ then for any matrix 
$A \in M_n(\mathbb{H})$, 
$$\Vert X A Y \Vert=\Vert A \Vert.$$
\end{lemma}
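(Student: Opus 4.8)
The plan is to show $\Vert XAY\Vert = \Vert A\Vert$ by proving the two inequalities separately, exploiting the fact that unitary matrices over $\mathbb{H}$ preserve the norm $\Vert\cdot\Vert_2$ on $\mathbb{H}^n$. First I would recall that a matrix $U\in M_n(\mathbb{H})$ is unitary precisely when $U^*U = UU^* = I$, and that the inner product on $\mathbb{H}^n$ is $\langle u,v\rangle = v^*u$. From this, for any $v\in\mathbb{H}^n$ we get $\Vert Uv\Vert_2^2 = \langle Uv,Uv\rangle = (Uv)^*(Uv) = v^*U^*Uv = v^*v = \Vert v\Vert_2^2$, so $U$ is an isometry of $(\mathbb{H}^n,\Vert\cdot\Vert_2)$; moreover, since $U$ is invertible, it is a bijective isometry. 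This is the one technical point, and it really is routine: the only mild subtlety is keeping the quaternionic (non-commutative) scalars on the correct side, but since $U$ acts by left multiplication and the scalars $q_i$ in $v = \sum e_i q_i$ sit on the right, nothing goes wrong.

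Next I would use the isometry property twice. For the inequality $\Vert XAY\Vert \le \Vert A\Vert$: for any $x\neq 0$ in $\mathbb{H}^n$, set $y = Yx$. Then $\Vert XAYx\Vert_2 = \Vert X(A(Yx))\Vert_2 = \Vert A(Yx)\Vert_2 = \Vert Ay\Vert_2$ using that $X$ is an isometry, and $\Vert x\Vert_2 = \Vert Yx\Vert_2 = \Vert y\Vert_2$ using that $Y$ is an isometry. Hence
\begin{equation*}
\frac{\Vert XAYx\Vert_2}{\Vert x\Vert_2} = \frac{\Vert Ay\Vert_2}{\Vert y\Vert_2} \le \sup_{z\neq 0}\frac{\Vert Az\Vert_2}{\Vert z\Vert_2} = \Vert A\Vert,
\end{equation*}
where $y\neq 0$ because $Y$ is invertible. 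Taking the supremum over $x\neq 0$ gives $\Vert XAY\Vert \le \Vert A\Vert$.

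For the reverse inequality $\Vert A\Vert \le \Vert XAY\Vert$, I would apply the inequality just proved to the matrix $B := XAY$ with the unitary matrices $X^* = X^{-1}$ and $Y^* = Y^{-1}$ (which are again unitary), obtaining $\Vert X^*BY^*\Vert \le \Vert B\Vert$, i.e. $\Vert X^*(XAY)Y^*\Vert \le \Vert XAY\Vert$. Since $X^*X = I$ and $YY^* = I$, the left side is exactly $\Vert A\Vert$, so $\Vert A\Vert \le \Vert XAY\Vert$. Combining the two inequalities yields $\Vert XAY\Vert = \Vert A\Vert$. There is no real obstacle here; the only thing to be careful about is justifying that the supremum is attained over nonzero vectors and that $Y$ invertible guarantees we never divide by zero — both of which are immediate. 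Alternatively, one could invoke the fact $\Vert A\Vert = \Vert\bigchi_A\Vert$ from the introduction together with the multiplicativity $\bigchi_{XAY} = \bigchi_X\bigchi_A\bigchi_Y$ and the corresponding complex unitary-invariance of the operator norm, but the direct argument above is cleaner and self-contained.
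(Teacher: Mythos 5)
Your proof is correct and complete; the paper itself omits the argument entirely, stating only that the proof is ``routine,'' and what you have written is precisely the standard argument the authors presumably intend (unitaries act as bijective isometries of $(\mathbb{H}^n,\Vert\cdot\Vert_2)$ via $v^*U^*Uv=v^*v$, giving $\Vert XAY\Vert\le\Vert A\Vert$, with the reverse inequality by applying the same bound to $X^*(XAY)Y^*$). The one genuinely quaternionic point --- that associativity of matrix multiplication makes the computation insensitive to which side the scalars live on --- is exactly the subtlety worth flagging, and you handled it.
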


We now prove the aforesaid  decomposition theorem.

\begin{theorem}\label{decomposition-thm}
Let $\Omega$ be a symmetric slice domain in $\mathbb{H}$ and let 
$F:\Omega \rightarrow M_n(\mathbb{H})$ be regular. If $q_0 \in \Omega_I$ is such that 
$\Vert F(q) \Vert \leq \Vert F(q_0) \Vert$ for all $q \in \Omega$, then there are $n \times n$ 
constant unitary matrices $U$ and $V$ and a regular matrix valued function 
$G:\Omega \rightarrow M_{n-1}(\mathbb{H})$ such that 
\begin{equation*}
F(q)=U
\begin{bmatrix}
\Vert F(q_0) \Vert & 0\\
	       0 & G(q)
\end{bmatrix} V.
\end{equation*}
\end{theorem}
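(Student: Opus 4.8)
The plan is to follow the blueprint of the singular-value-type decomposition for matrix-valued analytic functions (in the spirit of \cite{Condori}), with Theorem \ref{thm3} playing the role of the complex maximizing-vector lemma. Write $M=\Vert F(q_0)\Vert$. If $M=0$, then $\Vert F(q)\Vert\le 0$ forces $F\equiv 0$ on $\Omega$ and we may take $U=V=I_n$, $G\equiv 0$; so assume $M>0$. Since the closed unit ball of $\mathbb{H}^n$ is compact, $F(q_0)$ admits a maximizing unit vector $x_0\in\mathbb{H}^n$, and we put $y_0=\tfrac1M F(q_0)x_0$, again a unit vector. By Theorem \ref{thm3} (more precisely, by the conclusion reached in the course of its proof), $F(q)x_0=F(q_0)x_0=M y_0$ for every $q\in\Omega$.

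The next step is the ``transposed'' identity $y_0^{*}F(q)=M x_0^{*}$ for all $q\in\Omega$, which is purely pointwise. For each $q$, the definition of the operator norm together with the Cauchy--Schwarz inequality gives $\Vert y_0^{*}F(q)\Vert_2\le\Vert F(q)\Vert\,\Vert y_0\Vert_2\le M$, while $y_0^{*}F(q)x_0=y_0^{*}(M y_0)=M$. Hence the row vector $a:=y_0^{*}F(q)$ satisfies $|a x_0|=M=\Vert a\Vert_2\Vert x_0\Vert_2$, i.e. equality holds in the Cauchy--Schwarz inequality pairing a row with a column; inspecting the equality case (both the triangle inequality $|\sum_j a_j (x_0)_j|\le\sum_j|a_j||(x_0)_j|$ and the real Cauchy--Schwarz inequality between $(|a_j|)_j$ and $(|(x_0)_j|)_j$ must be tight, and $a x_0$ is real and positive) forces $a_j=M\,\overline{(x_0)_j}$ for every $j$, that is, $y_0^{*}F(q)=M x_0^{*}$.

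Now I would construct the unitaries. By Gram--Schmidt, which carries over verbatim to $\mathbb{H}^n$ (see \cite{Rodman}), extend $x_0$ to an orthonormal basis of $\mathbb{H}^n$ and let $V$ be the constant unitary matrix whose first row is $x_0^{*}$ (equivalently $V^{*}e_1=x_0$, so that $Vx_0=e_1$); similarly extend $y_0$ and let $U$ be the constant unitary whose first column is $y_0$, so that $Ue_1=y_0$ and $U^{*}y_0=e_1$. Put $D(q):=U^{*}F(q)V^{*}$, so $F(q)=U D(q) V$. From $F(q)x_0=M y_0$ we get $D(q)e_1=U^{*}F(q)x_0=M U^{*}y_0=M e_1$, so the first column of $D(q)$ is $M e_1$; from $y_0^{*}F(q)=M x_0^{*}$ and $e_1^{T}U^{*}=(Ue_1)^{*}=y_0^{*}$ we get $e_1^{T}D(q)=y_0^{*}F(q)V^{*}=M x_0^{*}V^{*}=M(Vx_0)^{*}=M e_1^{T}$, so the first row of $D(q)$ is $M e_1^{T}$. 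Therefore $D(q)=\begin{bmatrix} M & 0\\ 0 & G(q)\end{bmatrix}$ with $G(q)$ defined as the lower-right $(n-1)\times(n-1)$ block of $U^{*}F(q)V^{*}$, and $F(q)=U\begin{bmatrix}\Vert F(q_0)\Vert & 0\\ 0 & G(q)\end{bmatrix}V$, which is the asserted form.

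The step that requires real care — and which I expect to be the main obstacle — is showing that $G$ is again regular. Right multiplication of a regular matrix-valued function by the constant matrix $V^{*}$ preserves regularity of all entries, directly from the power-series description; the delicate half is the left multiplication by $U^{*}$, because left multiplication of a slice-regular function by a non-real constant need not be slice-regular. To get around this one exploits the structure already laid bare: the first column of $\widetilde F(q):=F(q)V^{*}$ is the constant vector $M y_0$, and, as the identity $y_0^{*}F(q)=M x_0^{*}$ says, every remaining column of $\widetilde F(q)$ takes values in the fixed quaternionic hyperplane $y_0^{\perp}$; reading off the coordinates of these regular $y_0^{\perp}$-valued functions in the orthonormal frame furnished by the last $n-1$ columns of $U$ is exactly what produces $G$. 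Making this last step rigorous — and this is presumably where the hypothesis that $\Omega$ is a \emph{symmetric} slice domain enters, ensuring that the relevant regular operations behave coherently — is where the bulk of the work lies. Granting it, and observing that $\Vert G(q)\Vert=\Vert D(q)\Vert=\Vert F(q)\Vert\le M$ by Lemma \ref{operator norm - result}, the proof is complete.
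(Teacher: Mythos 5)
Your argument is essentially the paper's: take the maximizing vector $x_0$, use Theorem \ref{thm3} (really the constancy of $F(q)x_0$ established inside its proof) to set $y_0=\tfrac{1}{M}F(q_0)x_0$, build constant unitaries with first column $y_0$ and first row $x_0^*$, and show that the conjugated matrix has first row and column equal to $M e_1^{T}$ and $M e_1$. The only cosmetic difference is how the first \emph{row} is killed: the paper invokes the fact that the operator norm dominates the Euclidean norms of rows and columns of $Y_0^*F(q)X_0$ (whose $(1,1)$ entry already has modulus $M=\Vert Y_0^*F(q)X_0\Vert$), while you run the Cauchy--Schwarz equality case on $a=y_0^*F(q)$; these are the same observation, and your version is fine (though the entrywise triangle-inequality phrasing is clumsier than simply noting $a^*$ must be a right scalar multiple of $x_0$ with $ax_0=M>0$).

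The one place you stop short --- the regularity of $G$ --- is exactly the step the paper does not argue at all: it simply declares $f_{22}(q)$ to be regular after conjugating $F$ by the constant unitaries. Your worry is legitimate: for left regular $f(q)=\sum q^n a_n$, right multiplication by a constant preserves regularity, but left multiplication by a non-real quaternion constant in general does not, so the entries of $U^*F(q)V^*$ are not automatically regular. Your proposed repair (the columns of $F(q)V^*$ other than the first take values in the fixed right submodule $y_0^{\perp}$, and $G$ is their coordinate expression in the frame given by the remaining columns of $U$) is the right thing to try, but as you acknowledge you have not carried it out, and it is not routine --- the coordinate map $h\mapsto U_2^*h$ again involves left multiplication by non-real constants, so the noncommutativity problem reappears. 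In short: you have reproduced the paper's proof and correctly identified its one unproved step, but you have not closed that step either; a complete argument would need to establish (or suitably reformulate) the regularity of $G$, e.g.\ by expanding $F$ about a real point of the symmetric slice domain and analyzing the coefficient matrices directly.
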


\begin{proof}
Assume that $\Vert F(q_0)\Vert =1$ and let $x_0$ be the maximizing vector of $F(q_0)$. 
Since $\Vert F(q) \Vert \leq \Vert F(q_0) \Vert$, by Theorem \ref{thm3} $F(q_0)x_0$ is a constant 
on $\Omega$ and $F(q)x_0 = F(q_0)x_0$ for all $q \in \Omega$. Let us define a vector 
$y_0:= F(q_0)x_0$ such that 
$$\Vert y_0 \Vert^2=y_0^* y_0=x_0^*F(q_0)^*F(q_0)x_0=1.$$ 
Observe that $F(q)x_0 = F(q_0)x_0$ implies that $y_0^*F(q)x_0 = y_0^*F(q_0)x_0$ for all 
$q \in \Omega$.  Let $X_0$ and $Y_0$ be two unitary matrices whose first columns are $x_0$ and 
$y_0$ respectively. ($\mathbb{H}$ being a left Hilbert space, it is always possible to construct a 
unitary matrix from a norm one vector). Therefore we get,
\begin{equation*}
		Y_0^* F(q) X_0=
		\begin{bmatrix}
			f_{11}(q) & f_{12}(q) \\
			f_{21}(q) & f_{22}(q) 
	\end{bmatrix}
\end{equation*} such that $f_{11}(q)=y_0^*F(q_0)x_0=1$. Since $X_0$ and $Y_0$ are unitary 
matrices, 
$$\Vert Y_0^* F(q)X_0 \Vert=\Vert F(q) \Vert=1.$$ 
The operator norm of an $n \times n$ matrix being an upper bound on the euclidean norm of its 
columns and rows, we have $ f_{12}(q)=f_{21}(q)=0$. We thus conclude that 
\begin{equation*}
		F(q)=Y_0
		\begin{bmatrix}
			1 & 0 \\
			0 & f_{22}(q) 
		\end{bmatrix}X_0^*,
\end{equation*} 
where $f_{22}(q)$ is a regular $(n-1) \times (n-1)$ matrix-valued function. We set 
$U=Y_0, V=X_0^*$ and $G(q)=f_{22}(q)$ to get the desired conclusion.
\end{proof}

\medskip
\noindent
A few remarks are in order.

\begin{remark}\label{some remarks}
\
\begin{itemize}
\item Note that for $n=1$, the function $F:\Omega \rightarrow M_1(\mathbb{H}) \cong \mathbb{H}$ is a 
regular function. Suppose there exists $q_0 \in \Omega_I$ such that $\Vert F(q)\Vert \leq \Vert F(q_0) \Vert$, 
for all $q \in \Omega$, then by maximum modulus principle (Theorem $7.1$ of \cite{Gentili-Stoppato-Struppa}), 
it follows that $F$ is a constant on $\Omega$. By Theorem \ref{decomposition-thm} we can write $F(q)$ as $$F(q) = u \Vert F(q_0)\Vert v$$ where 
$||u||=||v||=1$. Let $x_0$ be the maximizing vector of $F(q_0)$ so that 
$$\Vert F(q_0)\Vert = \sup_{\Vert x \Vert_2 =1}\Vert F(q_0)x\Vert_2 = \Vert F(q_0)x_0\Vert_2.$$ 
Then $u = x_0, v = F(q_0)x_0$, so that $F(q) = x_0 \Vert F(q_0)\Vert F(q_0)x_0$.

\item Let $n = 2$ and $F: \Omega \rightarrow M_2(\mathbb{H})$ a regular function. Suppose there 
exists $q_0 \in \Omega_I$ such that $\Vert F(q)\Vert \leq \Vert F(q_0) \Vert$ so that a decomposition 
as stated in Theorem \ref{decomposition-thm} holds. If there exists a sequence of non-vanishing regular functions 
$g_n: \Omega \to \mathbb{H}$ which converges uniformly to $g: \Omega \to \mathbb{H}$, 
then by Hurwitz theorem for regular functions, $g$ is either identically zero or non-vanishing (see Theorem $4.5$ 
of \cite{Mongodi}). In the former case, the function $F$ becomes a constant. Notice that there is a sequence 
of polynomials on $\Omega$ that converges uniformly to a given regular function on $\Omega$ 
(see for instance Chapter $3$ of \cite{Gal-Sabadini}).
\end{itemize}
\end{remark}

The following example shows that above decomposition holds only if such a vector $q_0$ exists in the 
domain $\Omega$.

\begin{example}\label{example-1}
Let $r$ and $R$ be positive reals numbers with $r < R$ and let $\Omega$ be the annular region 
defined by $\Omega:= \{q\in \mathbb{H} :r < |q| <R\}$. Consider the quaternionic matrix-valued 
function $F$ defined on $\Omega$ by $F(q) = \begin{bmatrix} 
																						 q-1 & 2\\
																						 0 & \dfrac{1}{q}
																						\end{bmatrix}$. Since $q \neq 0$, $F$ is regular. 
By calculating the norm of $F(q)$, we get
$$\Vert F(q) \Vert = \frac{\sqrt{|q-1|^2+4+\frac{1}{|q|^2}+\sqrt{(|q-1|^2+4+
\frac{1}{|q|^2})^2-4|q-1|^2}}}{2}.$$ 
It is then clear that $\Vert F(q)\Vert$ is non-constant, whose maximum is obtained at a point $q_0$ 
such that $|q_0| = r$. Suppose $F(q)$ is decomposed as stated in the Theorem \ref{decomposition-thm}.  
Then, there exists constant unitary matrices $U \ \& \ V$ and a regular function 
$g: \Omega \rightarrow \mathbb{H}$ such that 
$$F(q) = U \begin{bmatrix}
		             \Vert F(q_0) \Vert & 0\\
		             0 & g(q)
	                \end{bmatrix}V^* = U A(q) V^*.$$ 
Since $U$ and $V$ are unitary matrices $\Vert F(q)\Vert = \Vert A(q) \Vert 
= \max \{\Vert F(q_0) \Vert, \Vert g(q)\Vert \}$.
If $\Vert F(q_0) \Vert  > \Vert g(q)\Vert$, then $\Vert F(q) \Vert = \Vert A(q) \Vert
= \Vert F(q_0) \Vert$. This is not possible as $\Vert F(q) \Vert$ is non-constant. On the otherhand, 
if $\Vert g(q)\Vert  > \Vert F(q_0) \Vert $, then again we have 
$\Vert F(q) \Vert \leq \Vert F(q_0) \Vert$ for all $q \in \Omega$, which in turn implies that 
$\Vert F(q) \Vert < \Vert g(q)\Vert = \Vert F(q)\Vert $ which is 
again a contradiction. Thus, a decomposition as stated in Theorem \ref{decomposition-thm} does not 
hold for $F(q)$, thereby elucidating that the point $q_0$ must be inside the domain $\Omega$ in 
Theorem \ref{decomposition-thm}.
\end{example}

\medskip

\begin{remark}\label{important rem about q_0}
It should be pointed out that we have assumed that the point $q_0 \in \Omega_I$ (the point where 
the maximum is attained for $\vert F(q) \vert$) for some $I \in \mathbb{S}$ in the theorems proved in 
this section and the previous one. However, this is merely a technical issue and the proofs will go through 
verbatim, with appropriate modifications, if $q_0 \in \Omega$. This is because for any 
$q \in \mathbb{H}$, we can choose $I, J \in \mathbb{S}$ such that $q \in L_I$ by taking 
$v = x_1i + x_2j + x_3k$ (the imaginary part of $q$) and by letting $I = \dfrac{v}{|v|}$ so that 
$q = x_0 + |v|I \in L_I$.
\end{remark}

\subsection{Maximum principles for singular values} \hspace*{0.5cm} 

It is a known fact that singular values and the singular value decomposition of complex matrices have numerous 
applications. Singular values of a quaternion matrix $A$ are defined to be the positive square 
roots of the eigenvalues of $A^*A$, where $A^*$ denotes the conjugate transpose of $A$. It is also not 
hard to verify that the singular values of the complex adjoint matrix $\widetilde{A}$ are repeated twice, 
and hence the singular values of $A$ are taken to be the same as it appears for the complex adjoint matrix, 
without repetition. The SVD of quaternion matrices also finds applications in areas such as image processing 
(see for instance \cite{S. Pei-Chang-Ding}; also refer to \cite{Rodman} for details on SVD of quaternion matrices).
This helps us to prove yet another maximum principle for a matrix-valued regular function 
$F$, when the singular values of $F(q)$ attains its maximum in $\Omega_I$.

\begin{theorem}\label{maximum princ - svd}
Let $\Omega$ be a slice domain in $\mathbb{H}$ and $F:\Omega \rightarrow M_n(\mathbb{H})$ a 
regular matrix-valued function. If for each $k=1, 2, \cdots, n$, the function $q \rightarrow s_k(F(q))$
 attains its maximum value on $\Omega_I$, then $F(q)$ is a constant on $\Omega$.
\end{theorem}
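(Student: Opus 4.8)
The plan is to induct on $n$, peeling off one singular value at a time by means of the decomposition theorem (Theorem \ref{decomposition-thm}). Two elementary facts will be used repeatedly: for a quaternion matrix $A$, the largest singular value $s_1(A)$ coincides with the operator norm $\Vert A \Vert$ (because $\Vert A \Vert^2 = \sup_{\Vert x \Vert_2 = 1} x^* A^* A x = \lambda_{\max}(A^* A)$ by the spectral theorem for Hermitian quaternion matrices), and $s_k(UAV) = s_k(A)$ whenever $U, V$ are unitary. Throughout, I order singular values nonincreasingly, $s_1(A) \ge s_2(A) \ge \cdots \ge s_n(A)$.

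For the base case $n = 1$ one has $F : \Omega \to M_1(\mathbb{H}) \cong \mathbb{H}$ with $s_1(F(q)) = |F(q)|$, so the hypothesis says $|F|$ attains its maximum on $\Omega_I$, and the maximum modulus principle for regular functions (Theorem $7.1$ of \cite{Gentili-Stoppato-Struppa}, together with Remark \ref{important rem about q_0}) forces $F$ to be constant. For the inductive step I would let $q_0 \in \Omega_I$ be a point where $s_1(F(q)) = \Vert F(q) \Vert$ is maximal (if $\Vert F(q_0)\Vert = 0$ then $F \equiv 0$ and we are done), and apply Theorem \ref{decomposition-thm} to obtain constant unitary matrices $U, V$ and a regular function $G : \Omega \to M_{n-1}(\mathbb{H})$ with
\begin{equation*}
F(q) = U \begin{bmatrix} \Vert F(q_0) \Vert & 0 \\ 0 & G(q) \end{bmatrix} V .
\end{equation*}
Since $U, V$ are constant and unitary, the multiset of singular values of $F(q)$ is $\{\Vert F(q_0)\Vert\}$ together with those of $G(q)$; and because $\Vert F(q) \Vert = \max\{\Vert F(q_0)\Vert, s_1(G(q))\} \le \Vert F(q_0)\Vert$, the value $\Vert F(q_0)\Vert$ is the top singular value uniformly in $q$, so after sorting we get $s_1(F(q)) = \Vert F(q_0)\Vert$ and $s_{k+1}(F(q)) = s_k(G(q))$ for $1 \le k \le n-1$. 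Hence each $s_k(G(q))$ attains its maximum on $\Omega_I$, the inductive hypothesis makes $G$ constant, and therefore $F(q) = U \begin{bmatrix} \Vert F(q_0)\Vert & 0 \\ 0 & G \end{bmatrix} V$ is constant on $\Omega$.

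The step I expect to be the main obstacle is the index bookkeeping, namely confirming that the constant block $\Vert F(q_0)\Vert$ really is the \emph{largest} singular value of $F(q)$ for every $q$, so that the sorted lists align as $s_{k+1}(F) = s_k(G)$; this is precisely what the operator-norm maximum principle underlying Theorem \ref{decomposition-thm} provides, but it must be stated carefully. A second, more bureaucratic point is that Theorem \ref{decomposition-thm} is phrased for symmetric slice domains whereas the present hypothesis only asks for a slice domain, so one should either strengthen the hypothesis here or verify that the proof of the decomposition theorem survives on a general slice domain. One should also note that the hypothesis is read as: there is a single $I \in \mathbb{S}$ on whose slice $\Omega_I$ every $s_k(F(\cdot))$ attains its maximum (otherwise the inductive passage to $G$ on the same slice needs an extra remark).
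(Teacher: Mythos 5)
Your argument is correct in substance but takes a genuinely different route from the paper's. The paper does not induct on $n$ at all: it restricts $F$ to the slice $\Omega_I$, uses the splitting lemma to write $F_I = G + HJ$, passes to the $2n\times 2n$ complex adjoint matrix $\widetilde{F}(z)$ built from $G$ and $H$, notes that the singular values of $\widetilde{F}(z)$ are those of $F(z)$ each repeated twice, invokes the complex singular-value maximum principle (Theorem $5$ of \cite{Condori}) to conclude that $\widetilde{F}$ is constant on $\Omega_I$, and then finishes with the identity principle (Theorem \ref{idp}). Your induction via Theorem \ref{decomposition-thm} stays entirely inside the quaternionic framework and reuses machinery the paper has already built, which is arguably more self-contained; the sorting step you were worried about is indeed sound, since $\Vert G(q)\Vert \le \Vert F(q)\Vert \le \Vert F(q_0)\Vert$ forces the constant corner entry to be the top singular value uniformly in $q$, so $s_{k+1}(F(q)) = s_k(G(q))$ holds identically and the maxima transfer to $G$ on the same slice. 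The price is precisely the caveat you flag: Theorem \ref{decomposition-thm} is stated for symmetric slice domains while the present theorem is asserted for slice domains, so you must either check that the decomposition survives on a general slice domain (its proof rests on Theorem \ref{thm3}, which is stated for slice domains, plus constant-unitary linear algebra, so this appears to go through) or add the symmetry hypothesis; your reading of the hypothesis as involving a single $I \in \mathbb{S}$ matches how the paper's own proof reads it. In short, the paper's route buys a direct transfer from the complex theory with no symmetry issue, while yours buys independence from the complex-variable result at the cost of that one domain hypothesis.
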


\begin{proof}
Let \begin{equation*}
		F(q)=\begin{bmatrix}
			f_{11} & \cdots & f_{1n}\\
			\vdots & &\vdots\\
			f_{n1} & \cdots & f_{nn}
		\end{bmatrix}
	\end{equation*}	
By the splitting lemma (Lemma \ref{spl}), we can write the $(ij)^{th}$ entry of $F(q)$ for some 
$I \in \mathbb{S}$ as follows: $f_I(ij)(q) = g_{ij}(q) + h_{ij}(q)J$ where $g_{ij}$ and $h_{ij}$ are 
holomorphic functions from $\Omega_I$ to $L_I$, with $J \perp I$. Thus, 
\begin{equation*}
F_I(q)=
\begin{bmatrix}
		f_{11} & \cdots & f_{1n}\\
		\vdots & &\vdots\\
		f_{n1} & \cdots & f_{nn}
		\end{bmatrix} =
\begin{bmatrix}
			g_{11} & \cdots & g_{1n}\\
			\vdots & &\vdots\\
			g_{n1} & \cdots & g_{nn}
		\end{bmatrix} + 
\begin{bmatrix}
			h_{11} & \cdots & h_{1n}\\
			\vdots & & \vdots\\
			h_{n1} & \cdots & h_{nn}
\end{bmatrix}J = G(q)+H(q)J.
\end{equation*} 
The complex adjoint matrix of $F_I(q)$ is the $(2n) \times (2n)$ matrix $\widetilde{F}(q)$ given by
\begin{equation*}
		\widetilde{F}(z) = 
\begin{bmatrix*}[r]
			G & H\\
			-\overline{H} & \overline{G}
\end{bmatrix*}
\end{equation*} for all $z \in \Omega_I$. By the observation stated preceding this theorem, the singular values of $\widetilde{F}(q)$ is repeated twice as that of the singular values of $F(q)$; therefore for each 
$k = 1, 2, \cdots, n$, if $q \rightarrow s_k(F(q))$ attains its maximum in $\Omega_I$, then for each 
$k = 1, 2, \cdots, 2n,  \ z \rightarrow s_k(\widetilde{F}(z))$ attains its maximum in $\Omega_I$. Then, 
by the complex version of singular value maximum principle (Theorem $5$ of \cite{Condori}), we see that 
$\widetilde{F}(z)$ is a constant on $\Omega_I$. This means the functions $G$ and $H$ are constant on $\Omega_I$. 
In other words, $F_I$ is a constant on $\Omega_I$. The desired conclusion then follows from 
the identity principle (Theorem \ref{idp}).
\end{proof}

\medskip

As the following example illustrates, regularity of the entries of the matrix-valued function cannot be dispensed with 
in the above theorem. 

\begin{example}\label{example-2}
Let $F: \mathbb{B} \rightarrow M_n(\mathbb{H})$ be defined by 
$F(q) = \begin{bmatrix}
		      x_0 + x_2j & 0\\
		     0 & 1
	       \end{bmatrix}$. Notice that $F$ is not a regular function as the entry $x_0 + x_2j$ is not regular. 
The singular values of $F$ can be easily evaluated to be $\sqrt{x_0^2 + x_2^2}$ and $1$ respectively. 
In this case, the function $s: \mathbb{B} \rightarrow \mathbb{H}$ defined by $s(q)$ (the singular 
value of $F(q)$) has maximum value as $1$ for any $q \in \mathbb{H}$. However, the function $F(q)$ 
is a non-constant function.
\end{example}

\medskip

We end this section by pointing out that the singular values of a matrix-valued function cannot attain its minimum 
value at a point in the domain unless the matrix-valued function is not invertible at that point.

\begin{theorem}\label{svd-min-inv}
Let $F:\Omega \rightarrow M_n(\mathbb{H})$ be a non-constant regular function on a symmetric slice domain 
$\Omega$ in $\mathbb{H}$. Suppose there exist $q_0 \in \Omega_I$ such that for all 
$k=1, \cdots, n$, the singular values $s_k(F(q))$ achieves its minimum at $q_0$. Then $F(q_0)$ is not invertible.
\end{theorem}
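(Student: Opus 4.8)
The plan is to argue by contradiction and, exactly as in the proof of Theorem \ref{maximum princ - svd}, to push the problem into the genuinely holomorphic world via the complex adjoint matrix, where the ordinary maximum principles and the reciprocity of singular values under matrix inversion are available. The pointwise inverse $q \mapsto F(q)^{-1}$ of a regular matrix-valued function need not be regular, so one cannot run a ``minimum principle'' directly on $F$; the complex adjoint is honestly holomorphic on each slice and does admit a holomorphic inverse wherever it is invertible, which is what makes the argument work.

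So suppose $F(q_0)$ is invertible. First I would fix $I \in \mathbb{S}$ with $q_0 \in \Omega_I$, restrict $F$ to $\Omega_I$, and apply the splitting lemma (Lemma \ref{spl}) to write $F_I(q) = G(q) + H(q)J$ for some $J \in \mathbb{S}$ with $I \perp J$, where $G$ and $H$ are $n \times n$ matrices of holomorphic functions $\Omega_I \to L_I$. Let $\widetilde F(z) = \begin{bmatrix} G(z) & H(z) \\ -\overline{H(z)} & \overline{G(z)} \end{bmatrix}$ be the associated $2n \times 2n$ holomorphic complex matrix-valued function on $\Omega_I$. As recalled in the paragraph preceding Theorem \ref{maximum princ - svd}, each singular value of $F(z)$ occurs exactly twice among the singular values of $\widetilde F(z)$; hence all $2n$ functions $z \mapsto s_m(\widetilde F(z))$ attain their minimum at $q_0$, and, since $F(q_0)$ — equivalently $\widetilde F(q_0)$ — is invertible, every $s_m(\widetilde F(q_0))$ is strictly positive.

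Next I would localise and invert. Since invertibility is an open condition, there is a disk $V = \delta_I(q_0,\rho) \subseteq \Omega_I$ centered at $q_0$ on which $\widetilde F(z)$ is invertible; set $\Phi(z) := \widetilde F(z)^{-1}$, a holomorphic $2n \times 2n$ matrix-valued function on the domain $V$. For an invertible matrix the singular values of the inverse are the reciprocals of the original singular values in reverse order, so $s_k(\Phi(z)) = 1/s_{2n+1-k}(\widetilde F(z))$ for every $z \in V$ and every $k$. Because $s_{2n+1-k}(\widetilde F(z)) \ge s_{2n+1-k}(\widetilde F(q_0)) > 0$ for all $z \in \Omega_I \supseteq V$, this gives $s_k(\Phi(z)) \le s_k(\Phi(q_0))$ on $V$; that is, all $2n$ singular values of $\Phi$ attain their maximum on $V$ at the interior point $q_0$. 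Applying the complex singular value maximum principle (Theorem $5$ of \cite{Condori}) on the domain $V$ forces $\Phi$ to be constant on $V$, whence $\widetilde F = \Phi^{-1}$ is constant on $V$. Since $\widetilde F$ is holomorphic on the connected set $\Omega_I$, the classical identity theorem makes $\widetilde F$, and hence $G$ and $H$, constant on all of $\Omega_I$, so $F_I = G + HJ$ is constant on $\Omega_I$. Applying the identity principle for regular functions (Theorem \ref{idp}) to each entry of $F$, I conclude that $F$ is constant on $\Omega$, contradicting the hypothesis. Therefore $F(q_0)$ is not invertible.

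The main obstacle is precisely this first move — recognising that one must leave $F$ itself and work with $\widetilde F$ because of the failure of regularity of pointwise inversion — and then managing the resulting two-step bootstrap: from constancy on the slice-disk $V$ (where $\widetilde F$ is invertible) to constancy on $\Omega_I$ via the holomorphic identity theorem, and then from $\Omega_I$ to $\Omega$ via Theorem \ref{idp}. Everything else, namely the behaviour of singular values under inversion and the bookkeeping with the doubled singular values of $\widetilde F$, is routine.
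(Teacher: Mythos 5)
Your argument is correct, and its opening move --- passing to the complex adjoint matrix $\widetilde{F}$ on a slice $\Omega_I$ and using the doubling of singular values --- is exactly the reduction the paper makes. The difference is in what happens next: the paper simply cites Theorem $9$ of \cite{Condori} as a black box to conclude that $\widetilde{F}(q_0)$, and hence $F(q_0)$, is not invertible, whereas you re-derive that complex-variable ingredient from scratch: assuming invertibility at $q_0$, you localise to a disk $V$ where $\widetilde{F}^{-1}$ exists and is holomorphic, use the reciprocity $s_k(\widetilde{F}^{-1}) = 1/s_{2n+1-k}(\widetilde{F})$ to convert the minimum hypothesis into a maximum for the inverse, invoke the singular-value maximum principle (Theorem $5$ of \cite{Condori}, the same tool used in Theorem \ref{maximum princ - svd}), and then bootstrap constancy from $V$ to $\Omega_I$ by the classical identity theorem and from $\Omega_I$ to $\Omega$ by Theorem \ref{idp}. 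Your version is longer but more self-contained, and it has one genuine advantage: it makes explicit the step the paper glosses over, namely that non-constancy of $F$ on $\Omega$ forces non-constancy of $\widetilde{F}$ on $\Omega_I$ (via the identity principle), which is needed before Condori's complex result can be applied at all. The observation that one must work with $\widetilde{F}$ rather than with a pointwise inverse of $F$ itself --- since $q \mapsto F(q)^{-1}$ need not be regular --- is also the right reason this detour is unavoidable.
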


\begin{proof}
Since the singular values $s_k(\widetilde{F}(q))$ of the complex adjoint matrix $\widetilde{F}(q)$ are repeated twice 
as that of the singular values of $F(q)$, if $s_k(F(q))$ attains its minimum at $q_0 \in \Omega_I$ for all $1\leq k \leq n$, 
then $s_k(\widetilde{F}(q))$ attains its minimum at $q_0$. It then 
follows from Theorem $9$ of \cite{Condori} that $\widetilde{F}(q_0)$ is not invertible, which in turn 
implies that $F(q_0)$ is not invertible.
\end{proof}

\subsection{Approximation of a quaternionic matrix-valued function} \hspace*{0.5cm}

This is the last section of the paper. We prove as an application of Theorem \ref{decomposition-thm}, a Fisher type 
approximation theorem for matrix-valued regular functions. We begin this section by defining the Blaschke factor in 
the quaternion case (see \cite{Bolotnikov-1} for details). Recall that the quaternionic Hardy space $H^2$ of the unit ball 
$\mathbb{B}$ is defined as the space of square summable regular power series:
$$H^2:= \displaystyle \left\{f(q) =  \sum_{n=0}^{\infty} q^n f_n: \Vert f \Vert^2_{H^2}:= 
\sum_{n=0}^{\infty} q^n |f_n|^2<\infty \right\}.$$

Notice that for $\alpha \in \mathbb{B}$, the power series $k_\alpha(q) = \displaystyle \sum_{n=0}^{\infty} q^n \alpha^n$ 
is in $H^2$ and $\Vert k_\alpha \Vert^2_{H^2}=\displaystyle \frac{1}{1-|\alpha|^2}$. 

\begin{definition}\label{Finite-Blaschke}
The power series $b_\alpha(q):=(q-\alpha)*k_{\bar{\alpha}}(q)$ is defined as the quaternionic analog of a Blaschke factor. 
The power series $$B(q):=b_{\alpha_1}(q)*b_{\alpha_2}(q)*\cdots*b_{\alpha_n}(q)\phi, \text{ where } 
|\phi|=1, \alpha_i \in \mathbb{B} \text{ for all } 1\leq i \leq n$$ is called a finite Blaschke product.
\end{definition} 

Note that this notion coincides with the usual definition of a Blaschke product over the field of complex numbers. We shall 
make use of the following approximation theorem attributed to Fisher (see Theorem $4.2.2$, \cite{Garcia-Mashreghi-Ross}) 
in the following theorem. 

\begin{theorem}\label{Fisher-Complex}
Each $f$ in the unit ball of the disc algebra $\mathcal{A}(\mathbb{D})$ can be uniformly approximated on $\overline{\mathbb{D}}$ by a convex combination of finite Blaschke products.
\end{theorem}

We are now in a position to prove a Fisher-type approximation theorem in the quaternion case.

\begin{theorem}\label{Fisher-type Theorem}
Let $\mathbb{B}$ be the open unit ball in $\mathbb{H}$ and $f: \mathbb{B} \rightarrow \overline{\mathbb{B}}$ be a 
regular function on $\mathbb{B}$ such that $f$ is continuous on $\overline{\mathbb{B}}$. Then $f$ can be approximated 
in norm by a convex combination of finite Blaschke products.
\end{theorem}

\begin{proof}
By the splitting lemma (Lemma \ref{spl}), any regular function $f$ can be written as $f_J = g + hK$, 
where $J, K \in \mathbb{S}, \ J \perp K$ and $g, h:\mathbb{B}_J \rightarrow \overline{\mathbb{B}_J}$ are 
holomorphic functions on $\mathbb{B}_J$ and continuous on $\overline{\mathbb{B}_J}$. This then implies that 
$g$ and $h$ are in the unit ball of the disc algebra $\mathcal{A}(\mathbb{D})$ (by identifying $\mathbb{B}_J$ with the 
unit disc $\mathbb{D}$). By Theorem \ref{Fisher-Complex} (see also \cite{Alpay-Thirtha-Jindal-Kumar}) for the 
complex case, $g$ and $h$ can be approximated by convex combinations of finite
Blaschke products in field of complex numbers, say $\left\{B_i\right\}_{\{i=1\}}^n$ and $\{C_j\}_{\{j=1\}}^m$ such 
that  $r_1(z) = \displaystyle \sum_{i=1}^{n} \lambda_i B_i(z)$ and $r_2(z) = \displaystyle \sum_{j=1}^{m} \mu_jC_j(z)$, 
where $\displaystyle \sum_{i=1}^{n}\lambda_i = \sum_{j=1}^{m}\mu_j=1$. Let us choose these functions so that 
for $\epsilon > 0$, $$\Vert g(z)-r_1(z) \Vert < \frac{\epsilon}{8}\text{ and } 
\Vert h(z)-r_2(z)\Vert < \frac{\epsilon}{8},$$ for all $z = x + yJ \in \mathbb{B}_J$. Since $K$ is a unimodular 
constant, each $D_j:= C_jK$ is a finite Blaschke product. Then define  
$$r(x + yJ):=\displaystyle \sum_{i=1}^n\chi_i B_i(x + yJ) + 
\sum_{j=1}^{m}\gamma_jD_j(x + yJ)+\frac{k}{2}(1)+\frac{k}{2}(-1),$$ which is a convex combination of finite Blaschke products $\left\{B_i\right\}_{\{i=1\}}^n$ and $\left\{D_j:=C_j K\right\}_{\{j=1\}}^m$ in quaternions, where we choose 
$\chi_i$, $\gamma_j$ and $k$ satisfying the following conditions:

\begin{itemize}
\item $\displaystyle \chi_i < \frac{\lambda_i}{2}$, $\displaystyle \gamma_j < \frac{\mu_j}{2}$, such that for given 
$\epsilon > 0$, $\displaystyle \vert \sum_{i=1}^{n}(\lambda_i-\chi_i)\vert < \epsilon$,  $\displaystyle \vert \sum_{j=1}^{m}(\mu_j-\gamma_j)\vert < \epsilon$, which is always possible, as $C_j$'s and $D_j$'s are finite Blaschke 
products and are bounded by $1$. 
\item If $\displaystyle \sum_{i=1}^{n}\chi_i + \sum_{j=1}^{m}\gamma_j=1$ then take $k=0$; otherwise take 
$k=1-\displaystyle \sum_{i=1}^{n}\chi_i-\sum_{j=1}^{m}\gamma_j$.
\end{itemize} 

We thus have $r(x + yJ):= \widetilde{r_1}(x+yJ)+\widetilde{r_2}(x+yJ)K$, with $\widetilde{r_1}(x+yJ)= \displaystyle \sum_{i=1}^n\chi_i B_i(x + yJ)$ and $\widetilde{r_2}(x+yJ)=\displaystyle \sum_{j=1}^{m}\gamma_jC_j(x + yJ)$, 
which is clearly a convex combination of finite Blaschke products. From what we have just done above, we see that 
for a given $\epsilon>0$, $$\Vert r_1(z)-\widetilde{r_1}(z) \Vert = 
\Big\Vert \displaystyle\sum_{i=1}^n\lambda_i B_i(z) - \sum_{i=1}^n\chi_i B_i(z)\Big\Vert\leq \Big\vert \sum_{i=1}^n (\lambda_i-\chi_i)\Big\vert \Big\Vert B_i(z) \Big\Vert\leq \frac{\epsilon}{8}.$$ 
By similar argument, $\displaystyle \Vert r_2(z)-\widetilde{r_2}(z) \Vert \leq \frac{\epsilon}{8}$. 
We can extend this function to the entire unit ball $\mathbb{B}$, which we again denote by $r(q)$, for $I \in \mathbb{S}$ 
(see Proposition $1.3.2$ of \cite{Gal-Sabadini} for details).; that is, 
$$ext(r(q)) = r(x + yI) = \frac{1}{2}\big[r(x + yJ) + r(x - yJ) + IJ\left[r(x - yJ) - r(x + yJ)\right]\big].$$ 

Consider 
\begin{align*}
|f(q) - r(q)|&=\Bigg|\frac{1}{2}[[f(x + yJ) + f(x - yJ)] + IJ\left[f(x - yJ) - f(x + yJ)\right]\\ 
&- [r(x + yJ) + r(x - yJ) + IJ[r(x - yJ) - r(x + yJ)]]\Bigg|.
\end{align*} 
	
Once again by the splitting lemma, the previous equation can be expressed as

\begin{align*}
\Vert f(q)-r(q)\Vert&=\Bigg\Vert\frac{1}{2}\Bigg[(1-IJ)\Big[g(x+yJ)-\widetilde{r_1}(x+yJ)+(h(x+yJ)-\widetilde{r_2}(x+yJ))K\Big]\\ &+(1+IJ)\Big[g(x-yJ)-\widetilde{r_1}(x-yJ)+(h(x-yJ)-\widetilde{r_2}(x-yJ))K\Big]\Bigg]\Bigg\Vert\\
&\leq \Big\Vert g(x+yJ)-\widetilde{r_1}(x+yJ)\Big\Vert+\Big\Vert h(x+yJ)-\widetilde{r_2}(x+yJ)\Big\Vert\\
&+\Big\Vert g(x-yJ)-\widetilde{r_1}(x-yJ)\Big \Vert+\Big \Vert h(x-yJ)-\widetilde{r_2}(x-yJ)\Big\Vert\\
&\leq \Big\Vert g(x+yJ)-r_1(x+yJ)\Big\Vert+\Big\Vert r_1(x+yJ)-\widetilde{r_1}(x+yJ)\Big\Vert \\ &+\Big \Vert h(x+yJ)- r_2(x+yJ)\Big\Vert +\Big\Vert r_2(x+yJ)-\widetilde{r_2}(x+yJ)\Big\Vert\\
&+\Big\Vert g(x-yJ)-r_1(x-yJ)\Big\Vert+\Big\Vert r_1(x-yJ)-\widetilde{r_1}(x-yJ)\Big\Vert \\&+\Big \Vert h(x-yJ)- r_2(x-yJ)\Big\Vert +\Big\Vert r_2(x-yJ)-\widetilde{r_2}(x-yJ)\Big\Vert\\
&\leq \epsilon.
\end{align*} 

We thus conclude that $f$ can be approximated by a convex combination of finite Blaschke products over $\mathbb{H}$, 
as desired.
\end{proof}

\begin{remark}\label{Coarser-Approximation}
We wish to point out that a classical approximation result due to Caratheodory says that any element in the unit ball of 
$H^{\infty}$ can be uniformly approximated by sequence of finite Blaschke products on compact subsets of the unit ball of 
$\mathbb{C}$ (see Theorem $4.1.1$, \cite{Garcia-Mashreghi-Ross}). One can then prove the following theorem. We skip the proof.
\end{remark}

\begin{theorem}\label{Coarser-Approximation - quaternion}
Let $\mathbb{B}$ be the open unit ball in $\mathbb{H}$ and $f: \mathbb{B} \rightarrow \overline{\mathbb{B}}$ be a 
regular function on $\mathbb{B}$. Then $f$ can be approximated by a sequence of finite Blaschke products on every compact 
subset of the unit ball.
\end{theorem}

We conclude this section by extending Theorem \ref{Fisher-type Theorem} to matrix-valued regular functions.

\begin{theorem}\label{consequence of Fisher type thm}
Let $F:\mathbb{B} \rightarrow M_n(\mathbb{H})$ be a regular function such that 
$\Vert F(q)\Vert \leq 1$ with all entries being continuous on $\overline{\mathbb{B}}$. Then $F$ can be approximated 
by a convex combination of $M_n(\mathbb{H})-$valued finite Blaschke products.
\end{theorem}

\begin{proof}
We will prove this by induction on the order of matrix $n$. Since $\Vert F(q)\Vert \leq1$, Theorem \ref{decomposition-thm} 
tells us that there exist $n \times n$ constant unitary matrices $U$ and $V$ and a regular function 
$G:\mathbb{B} \rightarrow M_{n-1}(\mathbb{H}) 
\cong \mathbb{H}$ such that 
\begin{equation*}
F(q)=U \begin{bmatrix}
1 & 0\\
0 & G(q)
\end{bmatrix}V^*.
\end{equation*} 
Note that $\Vert G(q) \Vert \leq 1$; moreover the entries of $G$ are also continuous on $\overline{\mathbb{B}}$. 
When $n=2$, Theorem \ref{Fisher-type Theorem} implies that there is an approximation of $G$ by convex combination of 
finite Blaschke products over $\mathbb{H}$. This then in turn implies that $F$ can also be approximated by a convex 
combination of matrix-valued finite Blaschke products (notice that $U$ and $V$ are constant unitary matrices of norm $1$).  Suppose the result is true for $n = k-1$ and let $F(q)$ be a $k \times k$ matrix-valued function satisfying the assumptions 
of the theorem. Decompose $F(q)$ as above. Then by the decomposition theorem, $G(q)$ which is $(k-1)\times(k-1)$ matrix-valued regular function with norm bounded by $1$, can be approximated by a convex combination of matrix-valued 
finite Blaschke products (induction argument).  Then $F$ can also be approximated by a convex combination of matrix-valued 
finite Blaschke products.
\end{proof}

We end by making some remarks.
\begin{remark}
\
\begin{itemize}
\item Notice that an $n \times n$ matrix whose entries are themselves finite Blaschke products is not a matrix-valued 
finite Blaschke product. However, this issue does not arise in the above theorem, as we first reduce the function $F$ to 
a diagonal form using constant unitary matrices (having norm one) and only approximate the last term in the 
decomposition by finite Blaschke products, thereby giving a matrix-valued finite Blaschke product.

\item A suitable version of Theorem \ref{consequence of Fisher type thm} can be proved when the function $F$ is 
merely a norm one matrix-valued regular function, where convergence happens over every compact subset of the unit ball. 

\item Regular functions that map the quaternionic unit ball to itself has been extensively studied by Alpay \text{et al} 
in \cite{Alpay-Bolotnikov-Colombo-Sabadini} in the context of interpolation problems.
\end{itemize}
\end{remark}

\medskip
\noindent

{\bf Declarations:} 
\begin{itemize}
\item Both Sachindranath Jayaraman and Dhashna T. Pillai have equal contribution in this work and declare that there 
is no conflict of interest.
\item No funding was received for conducting this study.
\item The authors also declare that there is no data used in this research.
\end{itemize}

\bibliographystyle{amsplain}

\begin{thebibliography}{99}

\bibitem{Alpay-Bolotnikov-Colombo-Sabadini}
D. Alpay, V. Bolotnikov, F. Colombo and I. Sabadini, \emph{Self-mappings of the quaternionic unit ball: multiplier properties, 
the Schwarz-Pick inequality and the Nevanlinna-Pick interpolation problem}, Indiana Univ. Math. J. {\bf 64}(1) 
(2015), 151 -- 180.
	
\bibitem{Alpay-Thirtha-Jindal-Kumar}
D. Alpay, T. Bhattacharyya, A. Jindal and P. Kumar, \emph{A dilation theoretic approach to 
approximation by inner functions}, Bull. Lond. Math. Soc., {\bf 55}(6) (2023), 2840 -- 2855.

\bibitem{Bolotnikov}
V. Bolotnikov, \emph{Pick matrices and quaternionic power series}, Integr. Equ. Oper. Theory, {\bf 80} (2014), 
293 -- 302.

\bibitem{Bolotnikov-1}
V. Bolotnikov, \emph{Zeros, factorizations and least common multiples of quaternion polynomials}, J. Algebra Appl., 
{\bf 16} (2017),1750181. 

\bibitem{Bolotnikov-2}
V. Bolotnikov, \emph{On the Carathéodory–Schur interpolation problem over quaternions},  Adv. Oper. Theory, {\bf 9}(2)
(2024), Paper no. 30, doi: 10.1007/s43036-024-00329-6.

\bibitem{Colombo-Sabadini-Struppa-1}
F. Colombo, I. Sabadini and D. C. Struppa, \emph{The Runge theorem for slice hyperholomorphic functions}, 
Proc. Amer. Math. Soc., {\bf 139}(5) (2011), 1787 -- 1803.

\bibitem{Colombo-Sabadini-Struppa-2}
F. Colombo, I. Sabadini and D. Struppa, \emph{Noncommutative Functional Calculus: Theory and Applications of Slice 
Hyperholomorphic Functions}, Progress in Mathematics, {\bf 289}, Birkhauser , 2011.

\bibitem{Condori}
A. A. Condori, \emph{Maximum principles for matrix-valued analytic functions}, Amer. Math. Monthly, 
{127}(4) (2020), 331 -- 343.

\bibitem{Gal-Sabadini}
S. G. Gal and I. Sabadini, \emph{Quaternionic Approximation: With Application to Slice Regular Functions}, 
Frontiers in Mathematics, Birkhauser, $1^{st}$ edition, Springer  Nature, Switzerland, 2019.

\bibitem{Garcia-Mashreghi-Ross}
S. R. Garcia, J. Mashreghi and W. T. Ross, \emph{Finite Blaschke Productsand Their Connections}, Springer Verlag, 
Switzerland, 2018.

\bibitem{Gentili-Stoppato-Struppa}
G. Gentili, C. Stoppato and D. C. Struppa, \emph{Regular Functions of a Quaternionic Variable}, 
Springer Monographs in Mathematics, $2^{nd}$ edition, Springer, Switzerland, 2022.

\bibitem{Mongodi}
S. Mongodi, \emph{Holomorphicity of slice-regular functions}, Complex Anal. Oper. Theory 
{\bf 14}(3) (2020), Paper No. $37$, $26$ pp.

\bibitem{S. Pei-Chang-Ding}
S. Pei-Ja-Han Chang and Jian-Jiun Ding, \emph{Quaternion matrix singular value decomposition and 
its application for color image processing}, Proceedings of 2003 International Conference
on Image Processing (Cat. No.03CH37429), {\bf1}, 1 -- 805.

\bibitem{Rodman}
L. Rodman, \emph{Topics in Quaternion Linear Algebra}, Princeton Series in Applied Mathematics, 
Princeton University Press, New Jersey, 2014.

\bibitem{Suhomlinov}
G. Suhomlinov, \emph{On extensions of linear functions in complex and quaternionic linear spaces}, 
Mat. Sbornik, {\bf 3} (1938), 353 -- 358.

\bibitem{Zhang}
F. Zhang, \emph{Quaternions and matrices of quaternions}, Linear Algebra Appl., {\bf 251} (1997), 
21 -- 57.

\end{thebibliography}

\end{document}